\renewcommand{\cite}{\citet*}
\numberwithin{equation}{section}
\newtheoremstyle{plain2}{\topsep}{2\topsep}{\itshape}
{0pt}{\bfseries}{.}{.5em}{}
\newtheoremstyle{definition2}{\topsep}{2\topsep}{}
{0pt}{\bfseries}{.}{.5em}{}
\theoremstyle{plain2}
\newtheorem{theorem}{Theorem}[section]
\newtheorem{proposition}[theorem]{Proposition}
\newtheorem{lemma}[theorem]{Lemma}
\newtheorem{corollary}[theorem]{Corollary}
\theoremstyle{definition2}
\renewcommand{\cite}{\citet}
\def\^#1{\ifmmode {\mathaccent"705E #1} \else {\accent94 #1} \fi}
\def\~#1{\ifmmode {\mathaccent"707E #1} \else {\accent"7E #1} \fi}
\def\*#1{#1^\ast}
\edef\-#1{\noexpand\ifmmode {\noexpand\bar{#1}} \noexpand\else \-#1\noexpand\fi}
\def\>#1{\vec{#1}}
\def\.#1{\dot{#1}}
\def\atop{\@@atop}
\def\%#1{\mathcal{#1}}
\let\original@left\left
\let\original@right\right
\renewcommand{\left}{\mathopen{}\mathclose\bgroup\original@left}
\renewcommand{\right}{\aftergroup\egroup\original@right}
\renewcommand{\phi}{\varphi}
\newcommand{\eps}{\varepsilon}
\newcommand{\eq}{\eqref}
\def\ER{Erd\H{o}s--R\'enyi}
\newcommand{\Po}{{\rm Po}}
\newcommand{\Bi}{\mathop{\mathrm{Bi}}}
\newcommand{\Var}{\mathop{\mathrm{Var}}}
\newcommand{\sgn}{\mathop{\mathrm{sgn}}}
\newcommand{\law}{\mathscr{L}}
\def\be#1{\begin{equation*}#1\end{equation*}}
\def\ben#1{\begin{equation}#1\end{equation}}
\def\bes#1{\begin{equation*}\begin{split}#1\end{split}\end{equation*}}
\def\besn#1{\begin{equation}\begin{split}#1\end{split}\end{equation}}
\def\ba#1{\begin{align*}#1\end{align*}}
\def\ban#1{\begin{align}#1\end{align}}
\def\given{\typeout{Command 'given' should only be used within bracket command}}
\newcounter{@bracketlevel}
\def\@bracketfactory#1#2#3#4#5#6{
\expandafter\def\csname#1\endcsname##1{%
\addtocounter{@bracketlevel}{1}%
\global\expandafter\let\csname @middummy\alph{@bracketlevel}\endcsname\given%
\global\def\given{\mskip#5\csname#4\endcsname\vert\mskip#6}\csname#4l\endcsname#2##1\csname#4r\endcsname#3%
\global\expandafter\let\expandafter\given\csname @middummy\alph{@bracketlevel}\endcsname
\addtocounter{@bracketlevel}{-1}}%
}
\def\bracketfactory#1#2#3{%
\@bracketfactory{#1}{#2}{#3}{relax}{1mu plus 0.25mu minus 0.25mu}{0.6mu plus 0.15mu minus 0.15mu}
\@bracketfactory{b#1}{#2}{#3}{big}{1mu plus 0.25mu minus 0.25mu}{0.6mu plus 0.15mu minus 0.15mu}
\@bracketfactory{bb#1}{#2}{#3}{Big}{2.4mu plus 0.8mu minus 0.8mu}{1.8mu plus 0.6mu minus 0.6mu}
\@bracketfactory{bbb#1}{#2}{#3}{bigg}{3.2mu plus 1mu minus 1mu}{2.4mu plus 0.75mu minus 0.75mu}
\@bracketfactory{bbbb#1}{#2}{#3}{Bigg}{4mu plus 1mu minus 1mu}{3mu plus 0.75mu minus 0.75mu}
}
\newcounter{ctr}\loop\stepcounter{ctr}\edef\X{\@Alph\c@ctr}%
\edef\csname s\X\endcsname{\noexpand\mathscr{\X}}
\edef\csname c\X\endcsname{\noexpand\mathcal{\X}}
\edef\csname b\X\endcsname{\noexpand\boldsymbol{\X}}
\edef\csname I\X\endcsname{\noexpand\mathbbm{\X}}
\def\now{%
\minute=\time%
\hour=\time \divide \hour by 60%
\hourMins=\hour \multiply\hourMins by 60%
\advance\minute by -\hourMins%
\zeroPadTwo{\the\hour}:\zeroPadTwo{\the\minute}%
}
\def\zeroPadTwo#1{\ifnum #1<10 0\fi#1}
\renewcommand\section{\@startsection {section}{1}{\z@}%
{-3.5ex \@plus -1ex \@minus -.2ex}%
{1.3ex \@plus.2ex}%sm
{\center\small\sc\mathversion{bold}\MakeTextUppercase}}
\def\subsection#1{\@startsection {subsection}{2}{0pt}%
{-3.5ex \@plus -1ex \@minus -.2ex}%
{1ex \@plus.2ex}%
{\bf\mathversion{bold}}{#1}}
\def\subsubsection#1{\@startsection{subsubsection}{3}{0pt}%
{\medskipamount}%
{-10pt}%
{\normalsize\itshape}{\kern-2.2ex. #1.}}
\def\blfootnote{\xdef\@thefnmark{}\@footnotetext}
\def\sp#1{^{(#1)}}
\def\ff{{\mathcal F}}
\def\giv{\,|\,}
\def\a{\alpha}
\def\b{\beta}
\def\s{\sigma}
\def\d{\delta}
\def\law{{\mathcal L}}
\def\cE{{\mathcal E}}
\def\ignore#1{}
\def\hB{{\widehat B}}
\def\half{\tfrac12}
\def\l{\lambda}
\def\L{\Lambda}
\def\Eq{\ =\ }
\def\Le{\ \le\ }
\def\Def{\ :=\ }
\def\CCC{A}
\def\dbw{d_{\mathrm{BW}}}
\def\hbw{{\cH}_{\mathrm{BW}}}
\def\iid{independent and identically distributed}
\def\hN{{\widehat N}}
\def\hX{{\widehat X}}
\def\hNN{{\widehat \cN}}
\def\ui{^{(1)}}
\def\rrr{q}
\def\Ref{\eq}
\def\url#1{{\tt #1}}
\def\nin{\noindent}
\begin{document}

\title{\sc\bf\large\MakeUppercase{Central moment inequalities using Stein's method}}
\author{\sc A. D. Barbour\thanks{Universit\"at Z\"urich; {\tt a.d.barbour@math.uzh.ch}}, 
Nathan Ross\thanks{University of Melbourne; {\tt nathan.ross@unimelb.edu.au}} and 
Yuting Wen\thanks{University of Melbourne; {\tt yutingyw@gmail.com}}}
%\date{\it  }
\maketitle

\begin{abstract}  
We derive explicit central moment inequalities for random variables that admit a Stein coupling, 
such as exchangeable pairs, size--bias couplings or local dependence, among others. The bounds are in 
terms of  moments (not necessarily central) of variables in the Stein coupling, which are typically 
local in some sense, and therefore easier to bound. In cases where the Stein couplings 
have the kind of behaviour leading to good normal approximation, the central moments are closely bounded 
by those of a normal. We show how the bounds can be used to produce concentration inequalities, and compare 
them to those existing in related settings. Finally, we illustrate the power of the theory by bounding the 
central moments of sums of neighbourhood statistics in sparse \ER\ random graphs. 
\end{abstract}

%\noindent\textbf{Keywords: } 

\section{Introduction}
Concentration inequalities are useful and powerful tools 
for estimating probabilities when exact computation is not possible. They have found 
important application in modern statistics \citep{Massart2007}. 
Obtaining such inequalities has been an active area of probability for decades; for example, see \cite{Boucheron2013} 
and the references there.  The standard method for deriving 
strong concentration inequalities requires bounds on the moment generating function. For sums of independent random 
variables, this leads to Hoeffding/Bennett/Chernoff inequalities. Such results generalize to martingales in various 
ways \citep{McDiarmid1998}, and to
%. The recent literature is too vast to responsibly survey here, but there has been significant focus on 
functions of independent random variables \citep{Boucheron2003}. The research closest to 
this paper is concerned with showing concentration inequalities for random variables admitting various coupling 
constructions related to Stein's method. The first results and key ideas are due to \cite{Chatterjee2007} and 
\cite{Chatterjee2010}, who worked under the assumption that it is possible to construct an exchangeable pair that 
is marginally distributed as the variable of interest, and has certain conditional moment boundedness properties. 
The ideas were extended to bounded size--bias couplings by \cite{Ghosh2011a, Ghosh2011b}, and then further to unbounded size--bias couplings satisfying a certain bounded in conditional probability assumption, by \cite{Cook2018}. The exchangeable pair results have been 
generalized to the matrix setting by \cite{Mackey2014}.

When it is not possible to bound the moment generating function, either due to its non-existence or to the complexity of 
the distribution of interest, another approach, that gives (sometimes only slightly) weaker  concentration inequalities, 
is to bound moments of the variable of interest. 
For sums of independent random variables, there are a number of Rosenthal/Pinelis/Burkholder/Davis/Gundy inequalities. 
For functions of independent random variables, this program has been developed in \cite{Boucheron2005}. 
Results for exchangeable pairs with additional structure are developed in \cite[Theorem~1.5(iii)]{Chatterjee2007}, 
with matrix versions in \cite{Paulin2016}.

While Stein's method has found huge success in bounding the error made in distributional approximation 
(see \cite{Barbour1992}, \cite{Chen2011} and \cite{Chatterjee2014}), the techniques for concentration inequalities lag 
behind. This is essentially because controlling the moment generating function requires additional restrictive 
assumptions, such as boundedness, on the couplings that have found such great success in Stein's 
method for distributional approximation. In this paper, we develop explicit general bounds on the central moments of a 
random variable of interest,  which are expressed in terms of the moments of the key variables appearing in 
a \emph{Stein coupling}.   In the context of sums of independent random variables, the analogues of these variables 
are the individual summands, and it is usual there to try to express a bound on the moments of the sum in
terms of those of the summands.  The Stein coupling formulation allows sums of random variables
with a wide range of dependence structures to be treated in a unified way;  the key variables may then
be different, but their moments still provide accessible quantities for expressing a bound.
We demonstrate the applicability of our approach in the
settings of sums of independent random variables, size--bias couplings and local dependence, 
and we compare our bounds to those previously derived.  
However, the main advantage of our approach is that it can be used to obtain concentration in applications 
where other methods have not been developed.  We illustrate this by showing concentration for 
the distribution of generic neighbourhood statistics in sparse \ER\ random graphs. 

In the next section, we introduce the setting for the couplings we consider and state our moment inequalities. 
Section~\ref{sec:concin} contains a discussion on how these moment inequalities translate to concentration 
inequalities and Section~\ref{sec:app} contains applications. The proofs are in Section~\ref{sec:proofs}.

\section{Stein couplings and moment inequalities}

The ordered collection of random variables $(W,W',G,R)$ with $\IE W=\mu$ are said to form an 
\emph{approximate Stein coupling} if
\ben{\label{eq:stncoup}
   \IE[G (f(W')-f(W))] \Eq \IE[(W-\mu)f(W)]+\IE [Rf(W)],
}
for all $f$ such that the expectations exist. If the remainder~$R$ is identically zero,  
the triple $(W,W',G)$ is called a \emph{Stein coupling}. 
Standard examples of Stein couplings are discussed below in Section~\ref{sec:app}.
These couplings are useful for establishing the error in approximating the distribution of~$W$ by
a normal distribution, using Stein's method;  see \cite{Chen2010}.
 With  $\s^2 := \Var W$, Stein's method shows that, if 
\[
     |\IE[(W-\mu)f(W) - \s^2 f'(W)]| \Le \eps(\s^2\|f'\|_\infty + \s^3 \|f''\|_\infty)
\]
for all~$f$ in a suitable class of functions with two bounded derivatives, then $\dbw(\law(\s^{-1}(W-\mu)),\cN(0,1)) = O(\eps)$ as $\eps\to0$.
Here, the bounded Wasserstein distance~$\dbw$ between probability measures $P$ and~$Q$ on~$\IR$ is defined
by
\[
      \dbw(P,Q) \Def \sup_{h \in \hbw}\bbabs{\int h\, dP - \int h\,dQ},
\]
where $\hbw := \{h\colon\,\IR \to \IR \colon\, \|h\|_\infty \le 1, \|h'\|_\infty \le 1\}$;  it metrizes
weak convergence.
Given a Stein coupling, and writing $D := W'-W$, it is immediate that
\bes{
   \lefteqn{ \IE[(W-\mu)f(W) - \s^2 f'(W)] \Eq \IE[G (f(W')-f(W))] - \s^2\IE f'(W) }\\
       &\qquad\qquad\qquad\qquad\Eq \IE[(GD-\s^2)f'(W)] + \IE[G(f(W+D) - f(W) - Df'(W))].
}
Hence, to establish the accuracy of normal approximation, it suffices to show, for instance, 
that $\IE|\IE\cls{GD \giv W} - \s^2| \le \eps \s^2$
and that $\IE|GD^2| \le \eps\s^3$, for suitable choice of~$\eps$.

In this paper, we show that the existence of approximate Stein couplings also yields
bounds for the even central moments of~$W$. To some extent, the bounds cover cases where
the distribution of~$W$ is not very close to being normal. To state our first theorem, 
we define $\|X\|_r := \clc{\IE\abs{X^r}}^{1/r}$ for
any random variable~$X$ and $r\in\IN:=\{1,2,\ldots\}$. Throughout the paper, we write
$\mu:=\IE W$ and $\sigma^2:=\Var(W)$.

\begin{theorem}\label{thm:stnmomineq}
Let $(W,W',G,R)$ be an approximate Stein coupling such that, for some constant $\eps$ such that $0\leq \eps<1$ and 
for some random variable $T\geq 0$,
\ben{\label{eq:remcond}
       \babs{\IE\cls{R|W}} \Le \eps\abs{W-\mu} + T.
}
Let $k\in\IN$ and suppose that $\|G\|_{2k} \le \CCC_k=:\CCC$ and that $\|D\|_{2k}\le B_k=:B$. Then
\bes{
    \|W-\mu\|_{2k} &\Le \frac{\CCC}{1-\eps } 
     \left[\left(1+\sqrt{\frac{B(1-\eps)}{\CCC(2k-1)}}\right)^{2k-1}-1\right] 
             +\frac{\norm{T}_{2k}}{1-\eps} \\
   &\Le \sqrt{\frac{(2k-1) \CCC B}{1-\eps}} \exp\left\{ \sqrt{\frac{B(1-\eps)(2k-1)}{\CCC}}\right\}
             +\frac{\norm{T}_{2k}}{1-\eps}\,.
}
\end{theorem}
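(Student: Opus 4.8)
The strategy is to run the Stein-coupling identity \eqref{eq:stncoup} with the test function $f(w) = (w-\mu)\babs{w-\mu}^{2k-2}\,\sgn(w-\mu)$, i.e.\ essentially $f(w) = (w-\mu)^{2k-1}$ for odd-power bookkeeping, so that the left side of \eqref{eq:stncoup} produces the term $\IE\abs{W-\mu}^{2k} = \norm{W-\mu}_{2k}^{2k}$ together with a remainder coming from $R$, while the right side is a sum over increments that can be Taylor-expanded in $D = W'-W$. Writing $M := \norm{W-\mu}_{2k}$, the goal is to derive a self-improving inequality of the shape $M^{2k} \le (\text{stuff involving } \CCC, B, \eps, M^{2k-1}, M^{2k-2}, \dots)$, and then solve it for $M$.

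\medskip
\noindent\textbf{Key steps.} First I would plug $f$ into \eqref{eq:stncoup}. The term $\IE[(W-\mu)f(W)]$ becomes $M^{2k}$. The remainder term is controlled by \eqref{eq:remcond}: $\babs{\IE[Rf(W)]} = \babs{\IE[\IE(R\mid W)f(W)]} \le \eps\,\IE\abs{W-\mu}^{2k} + \IE[T\abs{W-\mu}^{2k-1}] \le \eps M^{2k} + \norm{T}_{2k} M^{2k-1}$ by H\"older with exponents $2k$ and $2k/(2k-1)$. This is what lets us pull the $\eps M^{2k}$ to the left and divide by $1-\eps$ at the end; it also produces the $\norm{T}_{2k}/(1-\eps)$ summand. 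Second, for the coupling side $\IE[G(f(W') - f(W))]$, I would write $f(W') - f(W) = f(W+D) - f(W)$ and expand. The clean way is to use the exact integral/telescoping identity
\be
f(W+D) - f(W) \Eq \sum_{j=1}^{2k-1} \binom{2k-1}{j} (W-\mu)^{2k-1-j} D^j
\ee
since $f$ is a genuine polynomial of degree $2k-1$ in $(W-\mu)$; the $j=0$ term cancels. Then $\IE[G(f(W')-f(W))] = \sum_{j=1}^{2k-1}\binom{2k-1}{j}\IE[G\,(W-\mu)^{2k-1-j} D^j]$, and each summand is bounded by a generalized H\"older inequality splitting the $2k$-th moment across the three factors: $\babs{\IE[G(W-\mu)^{2k-1-j}D^j]} \le \norm{G}_{2k}\,\norm{W-\mu}_{2k}^{2k-1-j}\,\norm{D}_{2k}^{j} \le \CCC\, M^{2k-1-j} B^j$. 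This needs the hypotheses $\norm G_{2k}\le \CCC$, $\norm D_{2k}\le B$, and nothing more.

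\medskip
\noindent\textbf{Assembling and solving.} Combining the two sides gives
\be
(1-\eps) M^{2k} \Le \CCC \sum_{j=1}^{2k-1}\binom{2k-1}{j} M^{2k-1-j} B^j + \norm{T}_{2k}\, M^{2k-1}
\Eq \CCC\bclr{(M+B)^{2k-1} - M^{2k-1}} + \norm T_{2k} M^{2k-1}.
\ee
Dividing by $M^{2k-1}$ (the case $M=0$ being trivial) yields $(1-\eps)M \le \CCC\bclr{(M+B)^{2k-1} - M^{2k-1}}/M^{2k-1} + \norm T_{2k}$. Now set $x := M/B$ and $c := \CCC/((1-\eps)B)$; the inequality becomes $x \le c\bclr{(1+1/x)^{2k-1} - 1} + \norm T_{2k}/((1-\eps)B)$, and one checks that any $x$ satisfying this is at most the value quoted: the first displayed bound $\norm{W-\mu}_{2k} \le \frac{\CCC}{1-\eps}\bclr{(1+\sqrt{B(1-\eps)/(\CCC(2k-1))})^{2k-1}-1} + \norm T_{2k}/(1-\eps)$ is the fixed-point-type solution obtained by showing the right-hand side is a self-map / that the claimed value substituted into the RHS dominates it (using convexity of $t\mapsto (1+t)^{2k-1}$ and the elementary estimate $(1+s)^{m}-1 \le ms(1+s)^{m-1}$ or a Bernoulli-type bound to invert). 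The second bound then follows from the first by the crude estimate $(1+t)^{m} - 1 \le mt\,e^{mt}$ applied with $t = \sqrt{B(1-\eps)/(\CCC(2k-1))}$ and $m = 2k-1$, together with $\sqrt{(2k-1)}\cdot\sqrt{(2k-1)} = 2k-1$ absorbed into the prefactor and exponent.

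\medskip
\noindent\textbf{Main obstacle.} The Taylor/H\"older estimates are routine; the delicate part is the final algebraic inversion — going from the self-referential inequality $M \le \Phi(M)$ (with $\Phi$ increasing and concave-ish in the relevant range) to the explicit closed-form bound, and in particular verifying that the specific constant $\sqrt{B(1-\eps)/(\CCC(2k-1))}$ is the right choice that makes the bound both valid and as tight as the statement claims. One has to be careful that $\Phi$ may have no fixed point if $\CCC, B$ are too large relative to $k$, so the argument implicitly needs $M$ a priori finite (which it is, since $\norm D_{2k}, \norm G_{2k} < \infty$ forces $\norm{W-\mu}_{2k}<\infty$ via the same identity with a bootstrap on $k$), and then the bound holds unconditionally. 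Handling this monotone-iteration / a priori finiteness step cleanly is where the real care lies.
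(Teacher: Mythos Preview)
Your proposal is correct and follows essentially the same route as the paper: plug $f(w)=(w-\mu)^{2k-1}$ into \eqref{eq:stncoup}, control the $R$-term via \eqref{eq:remcond} and H\"older, expand $f(W+D)-f(W)$ binomially, apply H\"older with exponents $(2k, 2k/j, 2k/(2k-1-j))$ termwise, and arrive at the self-referential inequality $(1-\eps)M \le \CCC\bigl[(1+B/M)^{2k-1}-1\bigr] + \|T\|_{2k}$, which is exactly the paper's \eqref{1} with $\sqrt{x}=M$. The paper's inversion is precisely the monotonicity split you describe (the right side is decreasing in $M$, so either $M \le M_0 := \sqrt{(2k-1)\CCC B/(1-\eps)}$, or one substitutes $M_0$ into the right side), and the second bound is obtained from $(1+t)^\gamma-1 \le \gamma t\,e^{\gamma t}$ just as you say.
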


\medskip
The statement of the theorem looks at first sight rather complicated.  
Before discussing it, we state a related result with a slightly simpler bound
under the additional assumption
that, for each $r\in\IN$ of interest, $\IE\clc{|W'-\mu|^{r}} \leq \IE\clc{|W-\mu|^{r}}$, together with an explicit
statement about the smallness of~$\|T\|_{r}$.
%, there is a simpler bound,
%given in the next theorem.  
The extra assumption is satisfied for exchangeable pairs, when $\law(W') = \law(W)$, as well
as for the Stein coupling that we shall use for sums of independent random variables with mean zero.  
Note that, in this theorem, odd powers of $|W-\mu|$ are also allowed.
 
\begin{theorem}\label{thm:stnmomineq0}
Let $(W,W',G,R)$ be an approximate Stein coupling such that~\eq{eq:remcond} is satisfied.
%Assume also that, for each $l\in\IN$, $\IE\clc{|W'-\mu|^{l}} \leq \IE\clc{|W-\mu|^{l}}$. 
Then, for any $r\in\IN$ such that $\IE\clc{|W'-\mu|^{r}} \leq \IE\clc{|W-\mu|^{r}}$
and that $\s^{-1}\|T\|_{r} \le \eps'$ for some~$\eps' < 1 - \eps$,
it follows that
\bes{
    \|W-\mu\|_{r} 
   &\Le \sqrt{\frac{2(r-1) \|G\|_r \|D\|_r}{1-\eps-\eps'}}\,.
}
\end{theorem}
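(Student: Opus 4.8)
The plan is to test the approximate Stein coupling identity~\eqref{eq:stncoup} against the function $f(w):=\sgn(w-\mu)\,|w-\mu|^{r-1}$, which is the natural analogue of the function one uses to obtain Rosenthal-type inequalities for sums of independent summands; here and below I take $r\ge 2$ (for $r=1$ the asserted bound reads $\|W-\mu\|_1\le 0$, so $r\ge 2$ is tacit in the statement). Since $(w-\mu)f(w)=|w-\mu|^{r}$, on writing $\nu:=\IE|W-\mu|^{r}=\|W-\mu\|_{r}^{r}$ the identity~\eqref{eq:stncoup} becomes
\[
   \nu \Eq \IE\cls{G\bclr{f(W')-f(W)}} - \IE\cls{Rf(W)} ,
\]
whence $\nu\le\babs{\IE\cls{G(f(W')-f(W))}}+\babs{\IE\cls{Rf(W)}}$, and the whole argument reduces to bounding these two terms. (If $\nu=\infty$ or $\|G\|_{r}\|D\|_{r}=\infty$ there is nothing to prove; otherwise the use of the unbounded $f$ in~\eqref{eq:stncoup} is legitimised in the usual way, by first inserting the bounded approximants $f_{M}(w):=\sgn(w-\mu)\min(|w-\mu|,M)^{r-1}$ and letting $M\to\infty$.)

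For the remainder term I would condition on $W$ and use~\eqref{eq:remcond}: since $|f(W)|=|W-\mu|^{r-1}$, Hölder's inequality gives
\[
   \babs{\IE\cls{Rf(W)}} \Le \IE\bcls{\bclr{\eps|W-\mu|+T}\,|W-\mu|^{r-1}} \Le \eps\,\nu + \|T\|_{r}\,\nu^{(r-1)/r} .
\]
Since $r\ge 2$, Lyapunov's inequality gives $\s\le\|W-\mu\|_{r}=\nu^{1/r}$, so the hypothesis $\|T\|_{r}\le\eps'\s$ sharpens this to $\babs{\IE\cls{Rf(W)}}\le(\eps+\eps')\,\nu$.

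For the Stein term, the elementary input is that $x\mapsto\sgn(x)|x|^{r-1}$ is $C^{1}$ with derivative $(r-1)|x|^{r-2}$, so the mean value theorem yields
\[
   \babs{f(W')-f(W)} \Le (r-1)\,|D|\,\max\bclr{|W-\mu|,|W'-\mu|}^{r-2} \Le (r-1)\,|D|\bclr{|W-\mu|^{r-2}+|W'-\mu|^{r-2}} .
\]
Bounding $\babs{\IE\cls{G(f(W')-f(W))}}$ by Hölder's inequality with exponents $r$, $r$ and $r/(r-2)$ and then using the hypothesis $\IE|W'-\mu|^{r}\le\IE|W-\mu|^{r}=\nu$ to reabsorb the term containing $W'$, one gets $\babs{\IE\cls{G(f(W')-f(W))}}\le 2(r-1)\|G\|_{r}\|D\|_{r}\,\nu^{(r-2)/r}$. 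Combining the three estimates gives
\[
   \nu \Le 2(r-1)\,\|G\|_{r}\,\|D\|_{r}\,\nu^{(r-2)/r} + (\eps+\eps')\,\nu ;
\]
the case $\nu=0$ is trivial, and otherwise dividing through by $\nu^{(r-2)/r}$ and solving for $\nu^{2/r}$ (legitimate because $1-\eps-\eps'>0$) yields $\|W-\mu\|_{r}^{2}=\nu^{2/r}\le 2(r-1)\|G\|_{r}\|D\|_{r}/(1-\eps-\eps')$, which is the assertion.

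The mathematics is thus confined to Hölder's and Lyapunov's inequalities and a one-variable mean value theorem, so I do not expect a serious obstacle; the only point that needs any care is the integrability/truncation bookkeeping that justifies substituting the unbounded $f$ into~\eqref{eq:stncoup}. Two structural features are worth recording: the crude bound $\max\bclr{|W-\mu|,|W'-\mu|}^{r-2}\le|W-\mu|^{r-2}+|W'-\mu|^{r-2}$ is exactly what creates the factor $2$ in the constant $2(r-1)$; and the extra hypothesis $\IE|W'-\mu|^{r}\le\IE|W-\mu|^{r}$ is precisely what lets the $W'$-contribution be folded back into $\nu$ instead of surviving as an independent quantity --- which is why the corresponding bound in the general Theorem~\ref{thm:stnmomineq} is necessarily more elaborate.
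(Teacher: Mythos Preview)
Your proof is correct and follows essentially the same route as the paper's: test the Stein identity against $f(w)=\sgn(w-\mu)|w-\mu|^{r-1}$, control the remainder via~\eqref{eq:remcond} together with $\sigma\le\|W-\mu\|_r$, bound $|f(W')-f(W)|$ by the mean value theorem, and close with H\"older and the hypothesis $\IE|W'-\mu|^{r}\le\IE|W-\mu|^{r}$. The only cosmetic difference is that the paper treats even $r=2k$ (with $f(w)=(w-\mu)^{2k-1}$) and odd $r=2k+1$ (with $f(w)=(w-\mu)^{2k}\sgn(w-\mu)$) separately, whereas you handle both at once.
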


\medskip
Theorems \ref{thm:stnmomineq} and~\ref{thm:stnmomineq0} provide bounds for central moments of~$W$, expressed simply in
terms of the moments of the random variables $G$ and~$D$, which, in practical circumstances,
are easier to handle than~$W$ itself.  However, it is not at first sight clear how good the bounds 
are likely to be.  To get some idea of this, consider the case of a Stein coupling, when~$R=0$.
In this case, taking $f(w) = w-\mu$ in~\eq{eq:stncoup}, it follows that $\s^2  = \IE(GD)$. Thus,
using the Cauchy--Schwarz and H\"older inequalities, we have
\[
     \s \Eq \clc{\IE(GD)}^{1/2} \Le \clc{\|G\|_2\|D\|_2}^{1/2} \Le \clc{\|G\|_{2k}\|D\|_{2k}}^{1/2}
            \Le \clc{\CCC B}^{1/2}.
\]
Hence Theorems \ref{thm:stnmomineq} and~\ref{thm:stnmomineq0} show that $\|W-\mu\|_{2k}$ is sandwiched between~$\s$ 
and a multiple of
$\clc{\CCC B}^{1/2}$, which is the upper bound for~$\s$ obtained by replacing the $2$-norms of $G$ and~$D$ 
in the Cauchy--Schwarz inequality by
their $2k$-norms.  In many applications concerning asymptotics as the size~$n$ of a problem increases, 
the distribution of~$D$ remains more or less constant, while~$\s^2$
grows like~$n$.  Since $\IE(GD) = \s^2$, the distribution of~$n^{-1}G$ 
also remains more or less constant.  Hence $B$ can typically be chosen more or less constant in~$n$, whereas
$\CCC$ is proportional to~$n$, implying that $B/\CCC = O(n^{-1})$.  Thus the exponential factor in the second
inequality of Theorem~\ref{thm:stnmomineq} is close to~$1$ unless~$k$ is very large, making the constant
multiplying $\clc{\CCC B}^{1/2}$ smaller than that in Theorem~\ref{thm:stnmomineq0}.  

For example, for a sum $W := \sum_{i=1}^n X_i$ of \iid\ random variables
with zero mean, there is a Stein coupling $(W,W-X_I,-nX_I)$, where~$I$ denotes a random variable
with the uniform distribution on $\{1,2,\ldots,n\}$ that is independent of $(X_i)_{i=1}^n$, so that, 
in particular, $\eps=0$.  Then $\s^2 =  n\IE(X_1^2)$ and~$G$ have factors of~$n$, 
%in their definition, 
whereas $\IE(D^2) = \IE(X_1^2)$ is constant in~$n$.  In particular,
we can take $\CCC$ and~$B$ such that
$\CCC B = n\|X_1\|_{2k}^2$, as compared to $\s^2 = n\|X_1\|_2^2$, and, for fixed~$k$, the bound
on $\|W-\mu\|_{2k}$ in Theorem~\ref{thm:stnmomineq} is close to $\sqrt{n(2k-1)}\|X_1\|_{2k}$. 
This Stein coupling satisfies the extra condition of Theorem~\ref{thm:stnmomineq0}, giving
the bound $\sqrt{2n(2k-1)}\|X_1\|_{2k}$.
Note that, by considering $X_i \sim 2I-1$, where $I \sim {\rm Bernoulli}(1/2)$, the factor of $\sqrt{2k-1}$ 
is seen to be inevitable, being the rate of the growth of $\norm{N}_{2k}$ for $N$ standard normal; see Lemma~\ref{lem:asymnormoms}.

Theorems~\ref{thm:stnmomineq} and~\ref{thm:stnmomineq0} are our most useful results for difficult applications, 
such as Theorem~\ref{thm:nhoodfuncmombd} below.  However, we also show that sharper bounds can be obtained, if 
Stein's method of normal approximation applies to the variable under consideration.
The next theorem shows that, if this is the case,
the product $\CCC B$ in the leading term can be replaced by~$\s^2$.  The extra condition, given
in~\eq{ADB-0},  would be expected to be satisfied with~$\eps_3$ small, if normal approximation
were good.  Thus, if normal approximation is good enough, the even central moments of~$W$
cannot be too much bigger than those of the corresponding normal distribution.

\begin{theorem}\label{thm:stnmomineq2}
With the assumptions and notation of Theorem~\ref{thm:stnmomineq}, suppose that now,
for non-negative random variables $T_1$ and~$T_2$ and for non-negative $\eps_i$,
$1\le i\le 3$, we have
\ben{\label{eq:remcond2}
       \babs{\IE\cls{R|W}} \Le \eps_1\abs{W-\mu} + T_1,
}
where $\s^{-1}\|T_1\|_{2k} \le \eps_2$, and 
\ben{\label{ADB-0}
    |\IE\cls{GD \giv W} - \s^2| \Le \eps_3 (W-\mu)^2 + T_2.
}
Then, if $\eps_1+\eps_2+(2k-1)\eps_3 < 1$, it follows that
\bes{
   \|W-\mu\|_{2k} &\Le \frac{\s \sqrt{2k-1}}{\sqrt{1-\eps_1-\eps_2-(2k-1)\eps_3}}
        \bbbclr{1 + \frac{(k-1)\CCC B^2}{\s^3\sqrt{2k-1}} \exp\bbclc{\frac{B\sqrt{2k-1}}{\s}} 
               + \frac{\|T_2\|_k}{\s^2}}^{1/2}.
}
% where~$N$ denotes a standard normal random variable.
\end{theorem}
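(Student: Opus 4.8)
The plan is to run the argument behind Theorem~\ref{thm:stnmomineq} once more --- feeding the test function $f(w)=(w-\mu)^{2k-1}$ into the identity \eq{eq:stncoup} --- but now handling the linear term of the Taylor expansion sharply, by means of the extra hypothesis \eq{ADB-0}. Write $\bar W:=W-\mu$, $D:=W'-W$ and $m:=\|W-\mu\|_{2k}$, so that the target is an estimate for $\IE\bar W^{2k}=m^{2k}$; one may assume $m\in(0,\infty)$, the case $m=\infty$ being reducible to this by a standard truncation and $m=0$ being trivial.

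With $f(w)=(w-\mu)^{2k-1}$, the right-hand side of \eq{eq:stncoup} is $\IE\bar W^{2k}+\IE[\IE[R|W]\,\bar W^{2k-1}]$; by \eq{eq:remcond2}, H\"older's inequality and $\|T_1\|_{2k}\le\eps_2\sigma\le\eps_2 m$, the remainder term is bounded in modulus by $(\eps_1+\eps_2)m^{2k}$. On the left-hand side I expand $f(W')-f(W)=(\bar W+D)^{2k-1}-\bar W^{2k-1}=\sum_{j=1}^{2k-1}\binom{2k-1}{j}\bar W^{2k-1-j}D^j$. For the terms with $j\ge2$, the generalized H\"older inequality (splitting the exponent $2k=1+(2k-1-j)+j$) yields $|\IE[G\bar W^{2k-1-j}D^j]|\le\CCC B^j m^{2k-1-j}$, so their total is at most
\[
 \CCC\sum_{j=2}^{2k-1}\binom{2k-1}{j}B^j m^{2k-1-j}=\CCC m^{2k-1}\bigl((1+B/m)^{2k-1}-1-(2k-1)B/m\bigr)\le\CCC\binom{2k-1}{2}B^2 m^{2k-3}e^{(2k-1)B/m},
\]
using the elementary inequality $(1+u)^n-1-nu\le\binom{n}{2}u^2e^{nu}$, valid for $u\ge0$, with $n=2k-1$ and $u=B/m$.

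The new ingredient is the $j=1$ term, $(2k-1)\IE[GD\,\bar W^{2k-2}]=(2k-1)\IE[\IE[GD|W]\,\bar W^{2k-2}]$. Writing $\IE[GD|W]=\sigma^2+E$ with $|E|\le\eps_3\bar W^2+T_2$ by \eq{ADB-0}, this equals $(2k-1)\sigma^2\IE\bar W^{2k-2}$ plus a term of modulus at most $(2k-1)\eps_3 m^{2k}+(2k-1)\|T_2\|_k m^{2k-2}$ --- the last factor via H\"older with exponents $k$ and $k/(k-1)$, since $(2k-2)\cdot\tfrac{k}{k-1}=2k$. Combining these estimates, moving the $\eps$-multiplied copies of $m^{2k}$ to the left, and using $\IE\bar W^{2k-2}=\|W-\mu\|_{2k-2}^{2k-2}\le m^{2k-2}$, I arrive at
\[
 m^{2k}\bigl(1-\eps_1-\eps_2-(2k-1)\eps_3\bigr)\le(2k-1)\sigma^2 m^{2k-2}+(2k-1)\|T_2\|_k m^{2k-2}+\CCC\binom{2k-1}{2}B^2 m^{2k-3}e^{(2k-1)B/m};
\]
dividing by $m^{2k-2}$ leaves an inequality for $m^2$ whose last term still contains~$m$.

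To close the argument I dispose of this residual $m$ by a case split, which I expect to be the only genuine subtlety. If $m\le\sigma\sqrt{2k-1}$, the assertion is immediate: the right-hand side of the theorem has prefactor $\sigma\sqrt{2k-1}/\sqrt{1-\eps_1-\eps_2-(2k-1)\eps_3}\ge\sigma\sqrt{2k-1}\ge m$ and a bracketed factor $\ge1$. If instead $m>\sigma\sqrt{2k-1}$, then $B^2/m<B^2/(\sigma\sqrt{2k-1})$ and $e^{(2k-1)B/m}<e^{B\sqrt{2k-1}/\sigma}$, so --- using $\binom{2k-1}{2}=(2k-1)(k-1)$ --- the last term above is at most $(2k-1)\sigma^2\cdot\frac{(k-1)\CCC B^2}{\sigma^3\sqrt{2k-1}}e^{B\sqrt{2k-1}/\sigma}$; factoring $(2k-1)\sigma^2$ out of the whole inequality for $m^2$ and taking square roots gives precisely the stated bound. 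The point to emphasise is that one gets the regime $m\le\sigma\sqrt{2k-1}$ for free, and it is exactly this observation that lets the factor $\sqrt{2k-1}$ pass into the denominator and the exponent in the final form; the rest is routine bookkeeping with H\"older's inequality and the elementary estimate above.
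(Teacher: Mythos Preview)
Your proof is correct and follows essentially the same approach as the paper's own argument: both feed $f(w)=(w-\mu)^{2k-1}$ into \eq{eq:stncoup}, separate the $j=1$ term of the binomial expansion (handling it via \eq{ADB-0}) from the $j\ge2$ terms (bounded by H\"older and the Taylor-type estimate $(1+u)^n-1-nu\le\binom{n}{2}u^2e^{nu}$), and close with the same case split on whether $m\le\sigma\sqrt{2k-1}$ or not. The only differences are cosmetic---you write $m$ where the paper writes $\sqrt{x}$, and you are slightly more explicit about why the regime $m\le\sigma\sqrt{2k-1}$ is free.
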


\nin Note that it is always possible to take $\eps_3=0$ and $T_2 := |\IE\cls{GD \giv W} - \s^2|$.
Note also that 
the conditions of Theorem~\ref{thm:stnmomineq2} are similar to others used in the Stein's method literature; 
for example, see \cite[Theorem~3.1]{Chen2013a}, \cite[(2.4)]{Barbour2019}, and \cite[Theorem~3.11]{Rollin2007}, 
covering applications such as the anti-voter model and the number of isolated vertices in a sparse 
\ER\ random graph. 

\medskip
Supposing that $\s^{-2}\|T_2\|_k \le \eps_4$ with~$\eps_4$ small enough,
we have a further refinement, showing that the even central moments of~$W$ are bounded by a
quantity which, under typical asymptotics, is equivalent to the corresponding
normal moment.  To state the theorem, we define  
\ben{\label{ADB-h_k}
    h_k \Def 2^{-1/2}e^{5/2} \s^{-3}\CCC B^2 \sqrt{k-1}.
}

\begin{theorem}\label{thm:stnmomineq3}
Under the assumptions of Theorem~\ref{thm:stnmomineq2},
and assuming that $\s^{-2}\|T_2\|_k \le \eps_4$, suppose that
\[
    E \Def \eps_1 + \eps_2 + (2k-1)(\eps_3 + \eps_4) \ <\ 1 - h_k,
\]
and that $\s \ge B\sqrt{e(2k-1)}$. Then
\bes{
   \|\s^{-1}(W-\mu)\|_{2k} &\Le  \|N\|_{2k}/\sqrt{1-E-h_k}.        
}
where~$N$ denotes a standard normal random variable.
\end{theorem}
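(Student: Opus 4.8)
The plan is to start from the conclusion of Theorem~\ref{thm:stnmomineq2} and simplify the bound under the stronger hypotheses. Recall that Theorem~\ref{thm:stnmomineq2} gives
\[
   \|W-\mu\|_{2k} \Le \frac{\s \sqrt{2k-1}}{\sqrt{1-\eps_1-\eps_2-(2k-1)\eps_3}}
        \bbbclr{1 + \frac{(k-1)\CCC B^2}{\s^3\sqrt{2k-1}} \exp\bbclc{\frac{B\sqrt{2k-1}}{\s}}
               + \frac{\|T_2\|_k}{\s^2}}^{1/2}.
\]
First I would use the new assumption $\s \ge B\sqrt{e(2k-1)}$ to control the exponential factor: this gives $B\sqrt{2k-1}/\s \le 1/\sqrt e \le 1$, so $\exp\{B\sqrt{2k-1}/\s\} \le e$. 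Combined with $\sqrt{2k-1}\ge 1$, the middle term in the bracket is at most
\[
   \frac{(k-1)\CCC B^2}{\s^3\sqrt{2k-1}} \cdot e \Le \frac{e(k-1)\CCC B^2}{\s^3},
\]
and I would want to recognise this as being comparable to $h_k$. Since $h_k = 2^{-1/2}e^{5/2}\s^{-3}\CCC B^2\sqrt{k-1}$, and $\sqrt{k-1}\le k-1$ fails for $k=1$ (where both sides vanish) but $(k-1) \le e^{2}2^{-1/2}\sqrt{k-1}$ holds once $k\ge 2$ since then $\sqrt{k-1}\ge 1$; more carefully one checks $e(k-1) \le h_k \s^3/(\CCC B^2) \cdot (\text{something})$ — the point is that the constant $2^{-1/2}e^{5/2}$ in the definition of $h_k$ is chosen precisely so that $e(k-1)\CCC B^2/\s^3 \le h_k$. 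I would verify this elementary inequality (it reduces to $e(k-1) \le 2^{-1/2}e^{5/2}\sqrt{k-1}$, i.e. $\sqrt{k-1}\le e^{3/2}/\sqrt2 \approx 3.17$ times... wait, it actually requires checking; the clean statement is $(k-1) \le e^{3/2}2^{-1/2}\sqrt{k-1}$ which holds for $k-1 \le e^3/2 \approx 10$, so this needs the $\sqrt{k-1}$ rather than $k-1$ scaling to be genuinely used, or the constant absorbs it differently). The safe route: bound $e(k-1)\CCC B^2/\s^3 \le h_k$ directly by comparing constants, treating the $\sqrt{k-1}$ versus $(k-1)$ discrepancy via the remaining factors — this is the one routine calculation to get exactly right.

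Next I would handle the $\|T_2\|_k/\s^2$ term using the new hypothesis $\s^{-2}\|T_2\|_k \le \eps_4$, so that term is at most $\eps_4 \le (2k-1)\eps_4$. Putting these together, the bracket is bounded by $1 + h_k + (2k-1)\eps_4$, while the denominator prefactor satisfies
\[
   \frac{1}{\sqrt{1-\eps_1-\eps_2-(2k-1)\eps_3}} \Le \frac{1}{\sqrt{1 - (\eps_1+\eps_2+(2k-1)\eps_3) - (2k-1)\eps_4 - h_k}} = \frac{1}{\sqrt{1-E-h_k}}\cdot\sqrt{\frac{1-E-h_k}{1-\eps_1-\eps_2-(2k-1)\eps_3}}.
\]
The key algebraic move is: $\sqrt{2k-1}\,(1+h_k+(2k-1)\eps_4)^{1/2} \le \sqrt{2k-1}\,(1-E-h_k)^{-1/2}\cdot(1-\eps_1-\eps_2-(2k-1)\eps_3)^{1/2}$ would follow if $(1-\eps_1-\eps_2-(2k-1)\eps_3)(1+h_k+(2k-1)\eps_4) \le 1$, i.e. if the "gain" from the denominator being $<1$ dominates the "loss" from the bracket being $>1$. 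Writing $a := \eps_1+\eps_2+(2k-1)\eps_3$ and $b := h_k + (2k-1)\eps_4$, so that $E = a + (2k-1)\eps_4$ and $E + h_k = a+b$, I need $(1-a)(1+b)\le 1$, equivalently $b \le a + ab$, which is \emph{not} generally true. So instead I would keep the prefactor as is and simply bound $\sqrt{2k-1}(1+h_k+(2k-1)\eps_4)^{1/2}/\sqrt{1-a} \le \sqrt{2k-1}/\sqrt{1-a-b} = \sqrt{2k-1}/\sqrt{1-E-h_k}$, which holds iff $(1+b)(1-a-b)\le 1-a$, i.e. $b - ab - b^2 - ab \le 0$... let me restate: $(1+b)(1-a-b) = 1-a-b+b-ab-b^2 = 1-a-ab-b^2 \le 1-a$. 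This is true. So the bound becomes $\|W-\mu\|_{2k} \le \s\sqrt{2k-1}/\sqrt{1-E-h_k}$.

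Finally I would invoke Lemma~\ref{lem:asymnormoms} (referenced earlier for the growth rate of $\|N\|_{2k}$), or rather the precise statement that $\|N\|_{2k} = \{(2k-1)!!\}^{1/(2k)} \ge \sqrt{2k-1}$ for $N$ standard normal — actually I should check the direction: one has $\|N\|_{2k}^{2k} = (2k-1)!! = (2k-1)(2k-3)\cdots 1$, and since all factors except the last are $\le 2k-1$... no, that gives an upper bound on $(2k-1)!!$, hence $\|N\|_{2k} \le \sqrt{2k-1}$, the wrong direction. The correct fact is that $(2k-1)!! \ge (2k-1)^{?}$ — in fact $\|N\|_{2k}\to\infty$ like $\sqrt{2k}$ but one needs a lower bound valid for all $k$. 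Hmm: $(2k-1)!! = \frac{(2k)!}{2^k k!}$, and by a short computation $\{(2k-1)!!\}^{1/(2k)} \ge c\sqrt{2k-1}$ for some absolute $c$; but the theorem claims the clean bound with $\|N\|_{2k}$ itself, so I would just \emph{not} replace $\sqrt{2k-1}$ by anything and instead note that we need $\sqrt{2k-1}\le \|N\|_{2k}$. Since this may be false for small $k$ (for $k=1$, $\|N\|_2 = 1 = \sqrt{2\cdot1-1}$, equality; for $k=2$, $\|N\|_4 = 3^{1/4}\approx 1.316 < \sqrt3\approx1.732$), the honest approach must be that Lemma~\ref{lem:asymnormoms} gives a lower bound of the form $\|N\|_{2k} \ge \sqrt{2k-1}\cdot(1-\delta_k)$ or that the theorem's $h_k, E$ are defined to absorb the constant — so I would cite Lemma~\ref{lem:asymnormoms} for the inequality $\sqrt{2k-1} \le \|N\|_{2k}\sqrt{1+\text{(lower order)}}$, folding any such correction into the already-present slack.

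\textbf{Main obstacle.} The real work is bookkeeping: matching the ad hoc constants ($e$ from the exponential bound, the $\sqrt{k-1}$ vs.\ $k-1$ discrepancy, the $2^{-1/2}e^{5/2}$ in $h_k$, and the factor $\sqrt{2k-1}$ vs.\ $\|N\|_{2k}$) so that everything collapses into the single slack parameter $1-E-h_k$. I expect the definition of $h_k$ in~\eqref{ADB-h_k} and the threshold $\s\ge B\sqrt{e(2k-1)}$ were reverse-engineered exactly for this, so the proof is a sequence of elementary estimates with no conceptual difficulty — the one place to be careful is the $\|N\|_{2k}$ lower bound, which is where Lemma~\ref{lem:asymnormoms} must be used precisely rather than heuristically.
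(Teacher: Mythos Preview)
Your approach has a genuine gap that you yourself nearly identify at the end. Starting from the \emph{conclusion} of Theorem~\ref{thm:stnmomineq2} and simplifying can at best give
\[
   \|W-\mu\|_{2k}\ \le\ \frac{\s\sqrt{2k-1}}{\sqrt{1-E-h_k}},
\]
but the theorem claims the sharper bound with $\|N\|_{2k}$ in place of $\sqrt{2k-1}$. By Lemma~\ref{lem:asymnormoms}, $c_1(k)=\sqrt{2k-1}/\|N\|_{2k}$ increases from~$1$ to~$\sqrt e$, so $\|N\|_{2k}\le\sqrt{2k-1}$ for all $k\ge1$, with strict inequality for $k\ge2$. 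You cannot get a stronger conclusion from a weaker one by algebraic massage; there is no slack in Lemma~\ref{lem:asymnormoms} to absorb this, because the inequality goes the wrong way. (Your intermediate algebra, incidentally, is salvageable once you stop discarding the $1/\sqrt{2k-1}$ factor in the middle term; that is what makes it comparable to $h_k\asymp\sqrt{k-1}$ rather than $k-1$.)

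The paper does not start from the conclusion of Theorem~\ref{thm:stnmomineq2}. It returns to the intermediate inequality~\eqref{ADB-1} and, crucially, keeps the factor $\IE\{(W-\mu)^{2k-2}\}$ as $x_{k-1}^{k-1}$ rather than bounding it by $x_k^{k-1}$ via H\"older. This produces a \emph{recursion}: either $x_l\le\s^2\|N\|_{2l}^2$ already, or $(1-E-h_k)x_l^l\le(2l-1)\s^2 x_{l-1}^{l-1}$. Iterating from $l=k$ down to $l=1$ and using $\|N\|_{2l}^{2l}=(2l-1)\|N\|_{2(l-1)}^{2(l-1)}$ yields
\[
   x_k^k\ \le\ (1-E-h_k)^{-k}\,\s^{2k}\prod_{l=1}^k(2l-1)
         \ =\ (1-E-h_k)^{-k}\,(\s\|N\|_{2k})^{2k},
\]
which is exactly the claim. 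The product $\prod(2l-1)=(2k-1)!!=\|N\|_{2k}^{2k}$ is where $\|N\|_{2k}$ enters; bounding each factor by $2k-1$ would give $(2k-1)^k$ and recover only the Theorem~\ref{thm:stnmomineq2}-level bound. The condition $\s\ge B\sqrt{e(2k-1)}$ together with the bound $\|N\|_{2l}\ge\sqrt{(2l-1)/e}$ from Lemma~\ref{lem:asymnormoms} is what makes the Taylor remainder $H_l(\s^2\|N\|_{2l}^2)\le h_k$ at every step of the recursion.
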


We end this section with a lemma that is used in our proofs, but is also important in interpreting the sharpness 
of central moment bounds.

\begin{lemma}\label{lem:asymnormoms}
For~$N$ a standard normal random variable,
define
\be{
   c_1(k) \Def \clc{\sqrt{2k-1} / \|N\|_{2k}}.
}
Then $c_1$ is increasing in~$k$, with $c_1(1) = 1$ and $c_1(\infty) = \sqrt{e}$.
\end{lemma}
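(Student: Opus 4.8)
The plan is to compute $\|N\|_{2k}$ explicitly from the Gaussian moment formula and then analyse the resulting ratio. Recall that $\IE|N|^{2k} = \IE N^{2k} = (2k-1)!! = (2k)!/(2^k k!)$. Hence
\be{
   c_1(k)^{2k} \Eq \frac{(2k-1)^k}{(2k-1)!!} \Eq \frac{(2k-1)^k\, 2^k\, k!}{(2k)!}.
}
For $k=1$ the right-hand side is $1$, so $c_1(1)=1$. For the limit as $k\to\infty$, I would apply Stirling's formula to $k!$ and $(2k)!$; the factors of the form $(k/e)^k$ and $(2k/e)^{2k}$ combine with $(2k-1)^k 2^k$ so that the polynomial-in-$k$ Stirling prefactors cancel to leading order, leaving $c_1(k)^{2k} \sim (1 - 1/(2k))^k \to e^{-1/2}$, whence $c_1(k) \to e^{1/2}$. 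This gives $c_1(\infty) = \sqrt{e}$.

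For monotonicity, the cleanest route is to show that $\log c_1(k) = \tfrac{1}{2k}\log\!\bigl((2k-1)^k/(2k-1)!!\bigr)$ is increasing. Writing $a_k := (2k-1)^k/(2k-1)!! = \prod_{j=1}^{k}\frac{2k-1}{2j-1}$, I would compare $c_1(k+1)/c_1(k)$ by examining the ratio $a_{k+1}^{1/(2k+2)}/a_k^{1/(2k)}$, or equivalently show that the sequence $b_k := \frac{1}{2k}\log a_k$ satisfies $b_{k+1} > b_k$. An efficient way is to use a ratio/telescoping argument: since $\frac{1}{2k}\log a_k$ is a Cesàro-type average of the increments, it suffices to verify that the relevant increment dominates the running average, which reduces to an elementary inequality in $k$ that can be checked by expanding $\log(1+x)$ or by a direct induction. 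Alternatively, one can pass to the continuous version: set $\phi(t) := \log\Gamma$ and write $\log\|N\|_{2k}^2 = \phi(k+\tfrac12) - \phi(\tfrac12) - k\log\tfrac12$ via $(2k-1)!! = 2^k\Gamma(k+\tfrac12)/\Gamma(\tfrac12)$, so that $2\log c_1(k) = \log(2k-1) - \tfrac1k\bigl(\phi(k+\tfrac12)-\phi(\tfrac12)+k\log 2\bigr)$, and differentiate in $k$, using convexity of $\phi$ and standard bounds on the digamma function $\psi = \phi'$ (namely $\log(x-\tfrac12) < \psi(x) < \log x$) to sign the derivative.

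The main obstacle is the monotonicity claim rather than the two boundary evaluations: establishing that the derivative (or discrete difference) has a constant sign requires a sufficiently tight two-sided estimate on $\psi(k+\tfrac12)$ against $\tfrac1k\int$-type averages, and a crude bound may not separate the terms. I expect the digamma inequalities above, combined with the mean-value form $\tfrac1k(\phi(k+\tfrac12)-\phi(\tfrac12)) = \psi(\xi)$ for some $\xi\in(\tfrac12,k+\tfrac12)$, to be just strong enough; if not, one falls back on the convexity of $\phi$ to control the average by its endpoint value and closes the gap with the elementary estimate $\log(2k-1) > \tfrac1k\sum_{j=1}^k\log(2j-1) + (\text{positive correction})$, which is where the strict increase ultimately comes from.
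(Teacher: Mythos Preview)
Your limit argument contains a slip: from Stirling you get
\be{
   c_1(k)^{2k} \ \sim\ \frac{e^k}{\sqrt 2}\Bigl(1 - \tfrac1{2k}\Bigr)^{k},
}
not $c_1(k)^{2k}\sim(1-1/(2k))^k$.  The factor~$e^k$ is what makes $c_1(k)\to\sqrt e$ after taking $2k$-th roots; had $c_1(k)^{2k}\to e^{-1/2}$ held, you would conclude $c_1(k)\to 1$.  This is easily repaired, but as written the implication ``whence $c_1(k)\to e^{1/2}$'' does not follow from the line preceding it.

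The real gap is monotonicity.  You outline two routes (a Ces\`aro/telescoping argument and a digamma computation) but do not carry either through, and you correctly flag that a crude bound may fail to separate the terms.  Neither sketch, as stated, constitutes a proof.

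The paper's argument avoids all of this by working with the ratio
\be{
   \frac{c_1(k+1)^{2(k+1)}}{c_1(k)^{2k}}
   \Eq \frac{(2k+1)^{k+1}/(2k+1)!!}{(2k-1)^{k}/(2k-1)!!}
   \Eq \Bigl(1+\tfrac{2}{2k-1}\Bigr)^{k},
}
which is a one-line cancellation.  The classical inequality $(1+1/x)^{x+1/2}>e$ (with $x=(2k-1)/2$) gives the right-hand side $>e$, while a direct Stirling bound yields $c_1(k)^2<e$ for every~$k$.  Combining, $c_1(k+1)^{2(k+1)} > e\cdot c_1(k)^{2k} > c_1(k)^{2(k+1)}$, so $c_1$ is strictly increasing.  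The same ratio identity, together with $(1+2/(2k-1))^k\to e$, then forces the limit to be $\sqrt e$.  This is both shorter and more elementary than the digamma route you propose, and it sidesteps the delicate sign analysis you anticipated.
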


\begin{proof}
First, $\log c_1(k) < 1/2$
follows using the usual bounds for the error in Stirling's formula.  Then, using the expansion of 
$(x + 1/2) \log(1 + 1/x)$ for $x > 1$ in the first inequality, we find that
\ben{\label{eq:ckin}
     \frac{c_1(k+1)^{2(k+1)}}{c_1(k)^{2k}} \Eq \bbclr{1 + \frac{2}{2k-1}}^k\ >\ e\ >\ c_1(k)^2, 
}
which implies that $c_1(k+1) > c_1(k)$, and then also that $\lim_{k\to\infty} c_1(k) = \sqrt{e}$.
\end{proof}

\section{Concentration inequalities from central moment bounds}\label{sec:concin}
The bounds derived above can be used with Markov's inequality to show that the distribution of
a random variable~$W$ is concentrated about its mean, by starting from  
\ben{\label{eq:Mineq}
      \IP[|W-\mu| > t] \Le t^{-2k}\|W-\mu\|_{2k}^{2k},
}
and choosing~$k$ carefully.
Suppose that $(W_n,\,n\ge1)$ is a sequence of random variables with means $\mu_n$ and variances
$\s_n^2 \asymp n$.  
Then one weak form of concentration is to say that~$W_n$ is concentrated
about its mean~$\mu_n$ on the scale $(d_n)_{n\ge1}$ if, for any $r \ge 1$, there exist $K(r)$
and~$c(r)$ such that
\[
    \IP[|W_n - \mu_n| > c(r)d_n] \Le K(r)n^{-r},
\]
uniformly in~$n$.
Under the assumptions of Theorem~\ref{thm:stnmomineq}, and for an exact Stein coupling
with $\norm{n^{-1}G}_{2k}\leq \alpha_{n,k}$ and $\norm{D}_{2k}\leq \beta_{n,k}$, 
we have, 
\ben{\label{ADB-new-0}
   \IP[|W_n - \mu_n| > t_n] \Le t_n^{-2k}\bbclc{n(2k-1)\alpha_{n,k}\beta_{n,k}}^k
            \exp\bbbclc{\frac{2k}{\sqrt n}\,\sqrt{\frac{(2k-1)\beta_{n,k}}{\alpha_{n,k}}}}.
}
Thus, taking  $k = k_n := \log n$ and
\ben{\label{ADB-new-1}
     d_n \Def \sqrt{n(2k_n-1)\alpha_{n,k_n}\beta_{n,k_n}} 
                     \exp\bbbclc{\frac1{\sqrt n}\,\sqrt{\frac{(2k_n-1)\beta_{n,k_n}}{\alpha_{n,k_n}}}},
}
we have
\be{%\label{ADB-new-2}
     \IP[|W_n - \mu_n| > cd_n] \Le c^{-2\log n} \Eq n^{-2\log c},
}
which can be made smaller than~$n^{-r}$, for any given~$r$,
by choosing $c = c(r)$ large enough.  Thus, in this sense, Theorem~\ref{thm:stnmomineq} shows that~$W_n$
is concentrated around~$\mu_n$ on a scale~$(d_n)_{n\ge1}$, where~$d_n$ is as in~\eq{ADB-new-1}.

As remarked earlier, it is often the case in such asymptotics that the distributions
of $D$ and~$n^{-1}G$ are more or less constant in~$n$, in the sense that their tails are
uniformly bounded in~$n$.  However, for any~$n$, the norms $\|D\|_k$ and~$\|n^{-1}G\|_k$  
grow to infinity as~$k$ increases, unless the random variables themselves are uniformly bounded. The following lemma
is useful in determining how fast the norms grow with~$k$;  its proof is straightforward, 
by using, for example, saddle point methods.

\begin{lemma}\label{moments-from-tails}
 Let~$X$ be a random variable such that, for some $a,b,c > 0$, 
\ben{\label{ADB-new-3}
          \IP[|X| > x] \Le c e^{-bx^a} \quad\mbox{for all}~x > 0.
}
Then, for $k \ge 1$,
\be{%\label{ADB-new-3a}
     \IE |X|^{2k} \Le \frac{2kc}{a}\,\bbclr{b^{-1/a}}^{2k} \Gamma(2k/a).
}
If, instead, $a>1$ and
\ben{\label{ADB-new-4}
     \IP[|X| > x] \le c e^{-b(\log(x+1))^a}  \quad\mbox{for all}~x > 0,
} 
then, for $k \ge 1$,
\be{%\label{ADB-new-4a}
    \IE |X|^{2k} \Le c C_1(a,b) k^{a/(2(a-1))} \exp\{C_2(a,b) k^{a/(a-1)}\},
}
for suitable constants $C_1(a,b)$ and~$C_2(a,b)$.
\end{lemma}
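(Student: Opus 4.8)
The plan is to prove Lemma~\ref{moments-from-tails} by the standard layer-cake (tail integral) representation of moments, followed by careful estimation of the resulting integrals. For the first claim, I would start from the identity $\IE|X|^{2k} = \int_0^\infty 2k\,x^{2k-1}\IP[|X| > x]\,dx$, substitute the assumed bound~\eq{ADB-new-3}, and then change variables via $u = bx^a$ (so $x = (u/b)^{1/a}$, $dx = \tfrac1a b^{-1/a} u^{1/a - 1}\,du$). This turns the integral into $\tfrac{2kc}{a}\,b^{-2k/a}\int_0^\infty u^{2k/a - 1}e^{-u}\,du = \tfrac{2kc}{a}\,(b^{-1/a})^{2k}\,\Gamma(2k/a)$, which is exactly the claimed bound — this part is essentially a one-line computation.

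For the second claim, under the log-tail bound~\eq{ADB-new-4}, I would again write $\IE|X|^{2k} = \int_0^\infty 2k\,x^{2k-1}\IP[|X| > x]\,dx$, split off a trivial contribution from $x \le 1$ (bounded by $1$, which is absorbed into the constants), and on $x > 1$ use $\IP[|X| > x] \le c\,e^{-b(\log(x+1))^a} \le c\,e^{-b(\log x)^a}$. Substituting $x = e^t$ gives $\int_{0}^\infty 2k\,e^{2kt}e^{-bt^a}\,dt$ up to the constant~$c$. The integrand is $\exp\{2kt - bt^a\}$, whose exponent is maximised at $t_* = (2k/(ab))^{1/(a-1)}$, where it equals $2kt_* - bt_*^a = (a-1)b t_*^a = (a-1)b\,(2k/(ab))^{a/(a-1)} =: C_2(a,b)\,k^{a/(a-1)}$ for a suitable constant. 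A Laplace/saddle-point estimate of the integral — either via the second-order Taylor expansion of the exponent around~$t_*$, which contributes a width factor of order $(b a(a-1)t_*^{a-2})^{-1/2} \asymp k^{(2-a)/(2(a-1))}$, together with the prefactor $2k$, yielding an overall polynomial factor $\asymp k^{a/(2(a-1))}$ — or more crudely by a direct bound on $\int e^{-c(t-t_*)^2}\,dt$-type tails, gives the stated form $c\,C_1(a,b)\,k^{a/(2(a-1))}\exp\{C_2(a,b)\,k^{a/(a-1)}\}$.

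The main obstacle, such as it is, is making the Laplace estimate rigorous cleanly: one must control the contribution away from the saddle point $t_*$ without losing the sharp exponential rate, and one must absorb the low-order ($t$ near $0$, or $x \le 1$) contributions into the constants without cluttering the argument. A convenient device is to bound $2kt - bt^a \le C_2(a,b)k^{a/(a-1)} - \tfrac{b}{2}(t - t_*)^a$ for $t \ge 2t_*$ (using convexity of $t^a$) and to bound the integrand by $e^{C_2 k^{a/(a-1)}}$ times an integrable function on $[0, 2t_*]$ of total mass $O(t_*) = O(k^{1/(a-1)})$; combining the two ranges and noting $k^{1/(a-1)} \le k^{a/(2(a-1))}$ for $a \le 2$, and otherwise absorbing into $C_1$, yields the claim. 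Since the lemma only asserts the existence of suitable constants $C_1(a,b), C_2(a,b)$ rather than explicit values, I would not optimise them, and the proof can remain short — hence the paper's remark that it follows "by saddle point methods".
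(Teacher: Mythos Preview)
Your proposal is correct and follows exactly the approach the paper indicates: the paper does not give a detailed proof but simply remarks that the lemma ``is straightforward, by using, for example, saddle point methods'', which is precisely your layer-cake plus change-of-variables argument for the first bound and the Laplace/saddle-point analysis of $\int e^{2kt-bt^a}\,dt$ for the second. Your identification of the saddle $t_* = (2k/(ab))^{1/(a-1)}$, the exponential rate $\phi(t_*) \asymp k^{a/(a-1)}$, and the combined prefactor $2k\cdot|\phi''(t_*)|^{-1/2}\asymp k^{a/(2(a-1))}$ are all correct; the cruder tail-splitting device in your final paragraph is unnecessary (and slightly loose for $a>2$) once you have the genuine Laplace estimate, but since only the existence of constants is claimed this does no harm.
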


Hence the quantity~$d_n$ above is of order $O(\sqrt{n\log n})$ if both $D$ and~$n^{-1}G$ are a.s.\ 
bounded for all~$n$ by the same constants $x_1$ and~$x_2$.  If both $D$ and~$n^{-1}G$ have tails bounded 
as in~\eq{ADB-new-3},
uniformly for all~$n$, then $d_n = O((\log n)^{1/a + 1/2}\sqrt n)$;  if they have tails bounded as in~\eq{ADB-new-4},
uniformly for all~$n$, and if $a > 2$, then $d_n = O(n^{1/2+\d})$ for any $\d > 0$.

The classical large deviation bounds, such as the Chernoff bounds, deliver much smaller bounds
for the probabilities of large deviations than those required for the weaker form of concentration
discussed above.  We now show that our moment bounds can also deliver analogous results, if values
of~$k_n$ larger than~$\log n$ are taken.

Under the conditions of Theorem~\ref{thm:stnmomineq}, we can invoke~\eq{ADB-new-0}, and choose $k = k_n(t)$
to make the principal factor $t^{-2k}\bbclc{n(2k-1)\|n^{-1}G\|_{2k}\|D\|_{2k}}^k$ small, for fixed choice of~$t$. 
In particular we have the following corollary for a.s.\ bounded $n^{-1}G$ and~$D$.

\begin{corollary}\label{ADB-bounded}
 Under the assumptions of Theorem~\ref{thm:stnmomineq}, for an exact Stein coupling
such that $|D| \le x_1$ and $|n^{-1}G| \le x_2$ a.s.\ for all~$n$, we have,
for any $t\ge \sqrt{2 nx_1x_2e}$,
\besn{\label{ADB-new-5a}
   \IP[|W_n - \mu_n| > t] % &\Le e\exp\bbbclc{-\frac{t^2}{2nx_1x_2e}
%           \bbbclr{1 - \frac{2}{\sqrt n}\,\sqrt{\frac{(2k-1)x_1}{\|n^{-1}G\|_2}}}} \\
      &\Le       e\exp\bbbclc{-\frac{t^2}{2nx_1x_2e}
           \bbbclr{1 - \frac{2t}{ x_2  n \sqrt e}}}.
}
\end{corollary}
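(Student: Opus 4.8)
The plan is to invoke~\eq{ADB-new-0} with the specific bounds available here, namely $\alpha_{n,k} = x_2$ and $\beta_{n,k} = x_1$ (valid for all $k$, since a.s.\ boundedness by a constant implies the $2k$-norm is bounded by the same constant), and then optimise over~$k$ for a fixed deviation level~$t$. Substituting these into~\eq{ADB-new-0} gives
\be{
   \IP[|W_n - \mu_n| > t] \Le t^{-2k}\bbclc{n(2k-1)x_1x_2}^k
            \exp\bbbclc{\frac{2k}{\sqrt n}\,\sqrt{\frac{(2k-1)x_1}{x_2}}}.
}
First I would bound the exponential correction factor crudely: since the whole point is that the choice of~$k$ we end up making will not be too large relative to~$n$, I expect $\frac{2k}{\sqrt n}\sqrt{(2k-1)x_1/x_2}$ to be controlled, and I would absorb it into a factor of the form $\exp\{(\text{const})\,t/(x_2 n)\}$ after the optimal~$k$ is substituted — this is where the $\sqrt{6/e}\,t/(x_2 n)$ term in~\eq{ADB-new-5a} comes from. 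The main term $t^{-2k}\bbclc{n(2k-1)x_1x_2}^k$ is the one to optimise carefully.

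Next I would carry out the optimisation of the principal factor. Writing $g(k) := t^{-2k}\{2en x_1 x_2 k\}^k$ (using $2k-1 \le 2k$ and inserting a factor $e^k$ to anticipate a Stirling-type step — or, alternatively, working directly with $(2k-1)^k$), one takes the logarithm, $\log g(k) = k\log\{2en x_1 x_2 k\} - 2k\log t$, differentiates in~$k$ (treating $k$ as continuous), and finds the minimiser $k^* \asymp \frac{t^2}{2n x_1 x_2 e}$ up to the constant hidden in the $e$'s; at this point $\log g(k^*) \approx -k^* = -\frac{t^2}{2nx_1x_2 e}$. The constraint $t \ge \sqrt{2nx_1x_2 e}$ is exactly what guarantees $k^* \ge 1$, so that the optimal integer~$k$ is a legitimate choice in~\eq{ADB-new-0}; rounding $k^*$ to the nearest integer costs at most a multiplicative constant, which is where the leading factor $e$ in front of~\eq{ADB-new-5a} appears. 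Substituting this $k = k_n(t)$ back into the exponential correction factor and using $k^* \asymp t^2/(nx_1x_2)$ converts $\frac{2k}{\sqrt n}\sqrt{(2k-1)x_1/x_2}$ into something of order $(t^2/(nx_1x_2))^{3/2}\sqrt{x_1/x_2}/\sqrt n \asymp t^3/(n^2 x_1 x_2^2)$, and bounding this against $k^* \cdot t/(x_2 n)$ produces the $(1 - \sqrt{6/e}\,t/(x_2 n))$ factor inside the exponent.

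The main obstacle I anticipate is bookkeeping the constants so that the final statement comes out with the clean constants $2e$ and $\sqrt{6/e}$ rather than some messier expression: one must be careful about whether to bound $2k-1$ by $2k$ or keep it exact, where exactly the Stirling-type $e^k$ enters, and how the rounding of the non-integer optimiser interacts with the monotonicity of $g$. A secondary technical point is that the correction factor $\exp\{2k n^{-1/2}\sqrt{(2k-1)x_1/x_2}\}$ is increasing in~$k$, so after choosing $k = k_n(t)$ one genuinely has to control it at that value and not merely at~$1$; this is straightforward once $k_n(t) \asymp t^2/(nx_1x_2)$ is in hand, but it is the step where the hypothesis $t \ge \sqrt{2nx_1x_2 e}$ is used a second time (to ensure the correction does not overwhelm the gain, i.e.\ that $\sqrt{6/e}\,t/(x_2 n) < 1$ in the regime of interest, or at least that the bound is non-vacuous). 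Everything else is routine calculus and Markov's inequality via~\eq{eq:Mineq}.
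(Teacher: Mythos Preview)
Your proposal is correct and follows essentially the same route as the paper: start from~\eq{ADB-new-0} with $\alpha_{n,k}=x_2$, $\beta_{n,k}=x_1$, minimise the logarithm of the principal factor $t^{-2k}\{n(2k-1)x_1x_2\}^k$ to obtain $k_n(t)=\lceil t^2/(2nx_1x_2e)\rceil$ (the condition $t\ge\sqrt{2nx_1x_2e}$ ensuring $k_n(t)\ge1$ and the ceiling costing only the prefactor~$e$), and then control the exponential correction by using $2k-1\le 3t^2/(2nx_1x_2e)$ to convert $\tfrac{2}{\sqrt n}\sqrt{(2k-1)x_1/x_2}$ into $\tfrac{t}{x_2 n}\sqrt{6/e}$.
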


\nin The corollary gives good bounds as long as $t \ll n$.
However, the factor~$e$ in the denominator makes the
exponent smaller than that in the Chernoff bound;  we return to this later.

If we only have control over the tails of $|D|$ and~$|n^{-1}G|$, we can obtain the following.

\begin{corollary}\label{ADB-Weibull-tails}
 Under the assumptions of Theorem~\ref{thm:stnmomineq}, for an exact Stein coupling
such that both $|D|$ and~$|n^{-1}G|$ have tails bounded as in~\eq{ADB-new-3}, then for $t^2 \ge ne2^{(a+2)/a}(ab)^{-2/a}$, we have
\bes{%\label{ADB-new-6}
   \IP[\abs{W_n - \mu_n} > t] 
      &\Le   c\sqrt{2\pi}\, \bbbclc{\frac{2k_t}{a}+\frac{1}{2}}^{1/2}  e^{-k_t(a+2)/a}\,\exp\{n^{-1/2}(2k_t)^{3/2}\},
     }
where $k_t := \bfloor{\half\bclr{t^2/(ne)}^{a/(a+2)}(ab)^{2/(a+2)}}$.
\end{corollary}

\nin It is clear that under the hypotheses of the corollary, there are constants $K_1, K_2$ and~$K_3$ depending on $a$ and $b$ such that
\bes{%\label{ADB-new-6}
   \IP[\abs{W_n - \mu_n} > t] 
      &\Le   c K_1 (t^2/n)^{a/(2(a+2))}\exp\bbclc{-K_2\bbclr{\frac{t^2}{n}}^{a/(a+2)}\bbclc{1 - K_3\frac{t}{ n}}}.
     }
     The {\it power\/} of~$t$ in the exponent is now no longer as large
as~$2$, though it approaches $2$ as~$a$ increases.  On the other hand, the bound still gives useful results,
provided that $t \ll n$.

If the conditions of Theorem~\ref{thm:stnmomineq3} are satisfied, better bounds can be obtained;
in particular, if we have an exact Stein coupling for which the $k$-norms of $T_2 := |\IE\cls{GD \giv W} - \s^2|$
can be shown to be suitably small.

\begin{corollary}\label{cor:normal}
Under the assumptions of Theorem~\ref{thm:stnmomineq3}, taking $k = k(y) := \lceil y^2/2 \rceil$, we have
\ben{\label{ADB-new-8}
    \IP[\s_n^{-1}|W_n - \mu_n| > y] \Le e\sqrt 2 \exp\bbclc{-\frac{y^2}2}
              (1 - E - h_{k})^{-k}.
}
\end{corollary}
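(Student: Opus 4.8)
The plan is to feed the conclusion of Theorem~\ref{thm:stnmomineq3} into the Markov-type bound~\eq{eq:Mineq}, with the index~$k$ chosen as a function of the deviation level~$y$, exactly as was done for the weaker concentration statements earlier in this section. Concretely, I would start from
\be{
   \IP[\s_n^{-1}|W_n-\mu_n| > y] \Le y^{-2k}\,\|\s_n^{-1}(W_n-\mu_n)\|_{2k}^{2k}
      \Le y^{-2k}\,\|N\|_{2k}^{2k}\,(1-E-h_k)^{-k},
}
the second inequality being Theorem~\ref{thm:stnmomineq3}. So the whole corollary reduces to estimating $y^{-2k}\|N\|_{2k}^{2k}$ for the specific choice $k = k(y) = \lceil y^2/2\rceil$.

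The key computation is therefore a clean upper bound on the normal moment $\|N\|_{2k}^{2k} = \IE N^{2k} = (2k-1)!! = (2k)!/(2^k k!)$. Using Stirling with the standard error control (the same input invoked in the proof of Lemma~\ref{lem:asymnormoms}), one gets $\|N\|_{2k}^{2k} \le C (2k-1)^k e^{-k}$ for an explicit constant; tracking the constant carefully should yield the prefactor $\sqrt2$ and the correction term. Plugging $k = \lceil y^2/2\rceil$ in, and writing $k = y^2/2 + \theta$ with $\theta\in[0,1)$, one has $y^{-2k}(2k-1)^k e^{-k} = (\,(2k-1)/(ey^2)\,)^k$, and since $2k-1 \le y^2 + 1$ this base is at most $(1 + 1/y^2)/e = e^{-1}(1+1/y^2)$, so the factor is bounded by $e^{-k}(1+1/y^2)^k \le e^{-k}e^{k/y^2} = e^{-k+k/y^2}$. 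Finally $-k \le -y^2/2$ and $k/y^2 \le 1/2 + 1/y^2 \le$ (a bounded quantity), which after absorbing the $O(1)$ slack into the constants produces the stated $\sqrt2\,\exp\{-y^2/2 + 2/y^2\}$; the leftover $(1-E-h_k)^{-k}$ is just carried along unchanged from Theorem~\ref{thm:stnmomineq3}.

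The only real obstacle is bookkeeping: one must check that the $\lceil\cdot\rceil$ rounding, together with the Stirling error term and the inequality $(1+1/y^2)^k \le e^{k/y^2}$, all fit inside the advertised constants $\sqrt2$ and the additive $2/y^2$ in the exponent, rather than merely giving the right exponential order. I would do this by keeping $\theta\in[0,1)$ explicit throughout and verifying the worst case $\theta\to1$; there is nothing conceptually hard here, just a careful pass to make sure no constant is understated. One should also note implicitly that $k(y)\ge1$ for all $y>0$, so that Theorem~\ref{thm:stnmomineq3} (and its hypotheses $E < 1 - h_k$, $\s\ge B\sqrt{e(2k-1)}$) genuinely applies at the chosen index; these are inherited as assumptions of the corollary via ``under the assumptions of Theorem~\ref{thm:stnmomineq3},'' so no further work is needed beyond the substitution.
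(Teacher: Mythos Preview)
Your proposal is correct and follows exactly the paper's approach: apply Markov's inequality~\eq{eq:Mineq} with $t=y\s_n$, invoke Theorem~\ref{thm:stnmomineq3}, and then bound $y^{-2k}\|N\|_{2k}^{2k}$ via Stirling/Robbins for the specific choice $k=\lceil y^2/2\rceil$. The only refinement is that the Robbins bound gives the clean form $\|N\|_{2k}^{2k}\le \sqrt2\,(2k)^k e^{-k}$ (rather than $(2k-1)^k$), and, as you yourself note, one must keep $\theta=k-y^2/2\in[0,1)$ explicit rather than bounding $-k$ and $k/y^2$ separately, since the cancellation $-\theta+\theta=0$ in the exponent is what prevents losing an extra factor of~$e$ and yields exactly $-y^2/2+2\theta^2/y^2<-y^2/2+2/y^2$.
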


For an exact Stein coupling, the quantities $\eps_1$ and~$\eps_2$ are zero.  In most 
applications they, and $\eps_3$ and~$\eps_4$, can be expected to depend on~$n$ as a power~$n^{-\a}$, for a suitable
$\a > 0$ (often $\a=1/2$).  However, $\eps_2$ and~$\eps_4$ also involve $k$-norms of the error
random variables $T_1$ and~$T_2$, and the quantity~$h_k$ also involves $k$-norms of $D$ and~$n^{-1}G$.  Assuming
that their tails are bounded as in~\eq{ADB-new-3}, these norms can
be dealt with much as above, resulting in powers of~$k$ as factors;  so long as $y \ll n^\b$ for a suitably 
small index~$\b$, the bound
above is then useful.  In particular, if $|D|$, $|n^{-1}G|$, $n^{-1/2}|T_1|$ and~$n^{-1}|T_2|$ are uniformly bounded, then 
good bounds are obtained for $y \ll n^{1/4}$, equivalent to deviations of~$W_n$ from its mean of order $o(n^{3/4})$.
This is not as good a range as for the Chernoff bounds, but the main exponent is ideal.

\section{Applications}\label{sec:app}

Here we use Theorems \ref{thm:stnmomineq} -- \ref{thm:stnmomineq3} in some applications.

\subsection{Sums of independent random variables}
Let $W:=\sum_{i=1}^n X_i$, where $X_1,\ldots,X_n$
 are independent mean zero random variables with $\IE \abs{X_i}^{2k} <\infty$ for $1\leq i\leq n$.
Let $W'=W-X_I$, where $I$ is a uniform index from $\{1,\ldots, n\}$, and is independent of $(X_i)_{i=1}^n$.
%and $(X_1',\ldots, X_n')$ is an independent copy of $(X_1,\ldots,X_n)$. 
Then for $G:=-nX_I$, as mentioned above, it is easily checked  that
$(W, W', G)$ is a Stein coupling; see \cite{Chen2010}.
Note too that, for $D:=W'-W=-X_I$, we have $G=n D$ and so we can take $\CCC=n B$, with
\[
     B \Eq \|X_I\|_{2k} \Eq \bbclc{n^{-1}\sum_{i=1}^n \IE X_i^{2k}}^{1/2k}.
\]
Writing $\rho_k := \|X_I\|_{2k}/\|X_I\|_2$, the bound from Theorem~\ref{thm:stnmomineq} becomes
\bes{ \|\s^{-1} W\|_{2k}
    &\Le \frac{n\|X_I\|_{2k}}{\s}  \left[\left(1+\sqrt{\frac{1}{n(2k-1)}}\right)^{2k-1}-1\right] \\
    &\Le \rho_k \sqrt{(2k-1)}\exp\left\{ \sqrt{\frac{(2k-1)}{n}}\right\}.
}
%This bound should be compared to classical moment inequalities.

The theorems that give sharper bounds rely on establishing~\eq{ADB-0}.  For the coupling above,
writing $\s_i^2 := \IE X_i^2$, the simplest version is obtained by taking~$\eps_3=0$ and
\[
     T_2 \Def \IE\bbclc{\sum_{i=1}^n (X_i^2 - \s_i^2) \giv W},
\]
but this yields a result that is not as clean as the one following, that we derive by a slightly different argument.

 \begin{proposition}\label{ADB-independent}
For $W:=\sum_{i=1}^n X_i$, where $X_1,\ldots,X_n$
 are independent mean zero random variables with $\IE \abs{X_i}^{2k} <\infty$ for $1\leq i\leq n$ and an integer $k\geq2$, we have
\besn{\label{ADB-3a}
   \|\s^{-1}W\|_{2k} &\Le  \|N\|_{2k}/\sqrt{1-h_k'},        
}
where
\be{
h_k' \Def 5\sqrt{2e^3}\rho_k^3 \sqrt{\frac{k-1}n} \quad\mbox{and}\quad \rho_k \Def \|X_I\|_{2k}/\|X_I\|_2,
}
and $n$ is assumed to be large enough that $h_k'<1$.
\end{proposition}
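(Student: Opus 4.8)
The plan is to run the recursive argument underlying Theorems~\ref{thm:stnmomineq2}--\ref{thm:stnmomineq3}, but specialised to the independent‑summand Stein coupling $(W,W-X_I,-nX_I)$, where the term that would otherwise force a crude norm bound on $T_2$ instead telescopes. Write $\mu=0$, $M_r:=\IE W^r$, $W_i:=W-X_i$, $\sigma_i^2:=\IE X_i^2$, and $f(w):=w^{2k-1}$. Starting from $M_{2k}=\IE[Wf(W)]=\sum_{i=1}^n\IE[X_i f(W_i+X_i)]$ and expanding $(W_i+X_i)^{2k-1}$ binomially about $W_i$, the independence of $X_i$ and $W_i$ together with $\IE X_i=0$ kills the $j=0$ term, giving
\[
  M_{2k}=(2k-1)\sum_{i=1}^n\sigma_i^2\,\IE[W_i^{2k-2}]+\sum_{i=1}^n\sum_{j=2}^{2k-1}\binom{2k-1}{j}\IE[W_i^{2k-1-j}]\,\IE[X_i^{j+1}].
\]
Jensen's inequality (convexity of $|\cdot|^m$, $m\ge1$) gives $\IE[W_i^{2k-2}]\le M_{2k-2}$ and $\IE|W_i|^m\le\IE|W|^m$, so the first sum is at most $(2k-1)\sigma^2 M_{2k-2}$ — the exact Gaussian moment recursion — and the second sum $r_k$ is the error, so $M_{2k}\le(2k-1)\sigma^2M_{2k-2}+|r_k|$. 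I would then prove $M_{2k}\le(2k-1)!!\,\sigma^{2k}(1-h_k')^{-k}$ by induction on $k$, the base case $k=1$ being $M_2=\sigma^2$ (with $h_1'=0$); since $(2k-1)!!=\|N\|_{2k}^{2k}$, this is exactly the assertion.

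The heart of the matter is the estimate of $r_k$. Termwise, $|r_k|\le\sum_{j=2}^{2k-1}\binom{2k-1}{j}\IE|W|^{2k-1-j}\sum_i\IE|X_i|^{j+1}$; here $\sum_i\IE|X_i|^{j+1}=n\,\IE|X_I|^{j+1}\le n\|X_I\|_{2k}^{j+1}=\rho_k^{j+1}\sigma^{j+1}n^{-(j-1)/2}$, while — crucially — $\IE|W|^{2k-1-j}$ must be bounded by its \emph{normal}‑order magnitude via the inductive hypothesis, $\IE|W|^{2k-1-j}\le M_{2k-2}^{(2k-1-j)/(2k-2)}\le\|N\|_{2k-2}^{2k-1-j}\sigma^{2k-1-j}(1-h_{k-1}')^{-(2k-1-j)/2}$, which by Lemma~\ref{lem:asymnormoms} is of order $(\sqrt{2k}\,\sigma)^{2k-1-j}$. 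Using $\|N\|_{2k-2}^{2k-3}=(2k-3)!!/\|N\|_{2k-2}$, $(2k-1)(2k-3)!!=(2k-1)!!$, and $(k-1)/\|N\|_{2k-2}\le\sqrt e\,\sqrt{k-1}$ (Lemma~\ref{lem:asymnormoms} again), the $j=2$ summand comes out at most a fixed fraction of $h_k'\,(2k-1)!!\,\sigma^{2k}(1-h_{k-1}')^{-(k-1)}$; and the ratio of the $(j+1)$‑st summand to the $j$‑th is $\frac{2k-1-j}{j+1}\cdot\frac{\|X_I\|_{2k}}{\|N\|_{2k-2}\sigma}\le\frac{\sqrt e}{3}\,\rho_k\sqrt{(2k-3)/n}$, which is well below $1$ as soon as $h_k'<1$. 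Hence the summands $j\ge3$ form a rapidly convergent geometric tail and $|r_k|\le c\,h_k'\,(2k-1)!!\,\sigma^{2k}(1-h_{k-1}')^{-(k-1)}$ for a small absolute constant $c\le1$.

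Feeding this into $M_{2k}\le(2k-1)\sigma^2M_{2k-2}+|r_k|$ and using the inductive hypothesis $M_{2k-2}\le(2k-3)!!\,\sigma^{2k-2}(1-h_{k-1}')^{-(k-1)}$ gives $M_{2k}\le(2k-1)!!\,\sigma^{2k}(1-h_{k-1}')^{-(k-1)}(1+c\,h_k')$. Since $\rho_j=\|X_I\|_{2j}/\|X_I\|_2$ and $\sqrt{j-1}$ are nondecreasing in $j$, so is $h_j'$, whence $1-h_{k-1}'\ge1-h_k'$; and $1+c\,h_k'\le1+h_k'\le(1-h_k')^{-1}$ because $c\le1$. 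Therefore $M_{2k}\le(2k-1)!!\,\sigma^{2k}(1-h_k')^{-k}$, which closes the induction and yields $\|\sigma^{-1}W\|_{2k}\le\|N\|_{2k}/\sqrt{1-h_k'}$.

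The only real obstacle I anticipate is the remainder estimate: if $\IE|W|^{2k-1-j}$ is bounded by anything coarser than its normal‑order size — precisely what the inductive hypothesis supplies — spurious powers of $k$ enter and the recursion on $k$ fails to close, which is exactly why the general Theorem~\ref{thm:stnmomineq3} applied with $T_2=\IE[\sum_i(X_i^2-\sigma_i^2)\giv W]$ produces a messier bound. The remaining work is bookkeeping — summing the $j\ge3$ terms as a geometric series and tracking the $(1-h')$ powers through the recursion — and the generous constant $5\sqrt{2e^3}$ in $h_k'$ leaves ample slack for the crude inequalities (e.g.\ $\IE|X_I|^{j+1}\le\|X_I\|_{2k}^{j+1}$).
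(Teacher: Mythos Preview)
Your argument is correct and yields the stated bound, but the execution differs from the paper's in two notable ways.

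First, you expand $(W_i+X_i)^{2k-1}$ around $W_i$, so that independence factors every term as $\IE[W_i^{2k-1-j}]\IE[X_i^{j+1}]$; the $j=1$ term then directly produces $\sigma_i^2\IE[W_i^{2k-2}]$, which conditional Jensen compares to $\sigma_i^2 M_{2k-2}$. The paper instead expands around $W$ (following the Stein-coupling identity~\eqref{ADB-3}), obtaining $\IE[X_i^2 W^{2k-2}]$ for the $j=1$ term, and then uses the independence identity $\IE[(X_i^2-\sigma_i^2)W^{2k-2}]=\IE[(X_i^2-\sigma_i^2)(W^{2k-2}-W_i^{2k-2})]$ together with the mean value theorem and H\"older to control the discrepancy --- a slightly more indirect route to the same inequality.

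Second, for the remainder $j\ge2$, the paper bounds everything by $\|W\|_{2k}^{2k-3}$ (via H\"older), arriving at
\[
\|W\|_{2k}^{2k}\le(2k-1)\sigma^2\|W\|_{2k-2}^{2k-2}+10n(k-1)(2k-1)\|X_I\|_{2k}^3\,\|W\|_{2k}^{2k-3},
\]
and then argues by dichotomy: either $\|W\|_{2k}\le\sigma\|N\|_{2k}$ already, or the second term is at most $h_k'\|W\|_{2k}^{2k}$ and can be absorbed on the left, after which one iterates exactly as in the proof of Theorem~\ref{thm:stnmomineq3}. You instead bound the $j\ge2$ terms through the \emph{inductive hypothesis} at level $2k-2$ (via Lyapunov) and close a direct induction on $k$. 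Your route avoids the self-referential dichotomy and is self-contained; the paper's route unifies the argument with Theorem~\ref{thm:stnmomineq3} and makes the constant $10$ (and hence $5\sqrt{2e^3}$) visible as the combined contribution of the two error sources. Both approaches exploit the same generous slack in the constant, which is why your geometric-series bookkeeping --- strictly speaking a comparison of \emph{upper bounds} on successive summands rather than the summands themselves --- goes through comfortably.
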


\noindent To interpret the proposition, note that, if there is uniform control over the tails of the $X_i$, such as 
in Lemma~\ref{moments-from-tails}, then $h_k'\to0$ as $n\to\infty$ for any fixed $k$. Furthermore, even for 
tails as given by~\eq{ADB-new-4}, $h_k'\to0$ for $k$ growing as $(\alpha \log n)^\beta$, 
for $0 < \beta < a-1$.

\subsection{Local dependence} 
Let $W:=\sum_{i=1}^n X_i$, where the random variables $X_1,\ldots,X_n$ have mean zero, and where
$\IE \abs{X_i}^{2k} <\infty$, for $1\leq i\leq n$.
 Assume also, for each $i=1,\ldots,n$, that there are ``neighbourhoods''
$\mathcal{N}_i\subseteq \{1,\ldots, n\}$ such that~$X_i$ is independent of $W_i:=W-\sum_{j\in\mathcal{N}_i}X_j$.
Let $W':=W_I$, where $I$ is a uniform index from $\{1,\ldots, n\}$, independent of all else,
and set  $G:=-nX_I$. Then, it is easily checked that
$(W, W', G)$ is an exact Stein coupling; see also \cite[Construction~2A]{Chen2010}.
Here $D:=W'-W=-\sum_{j\in\mathcal{N}_I} X_j$ 
Thus we can apply the bound of Theorem~\ref{thm:stnmomineq} with 
\ba{
  A &\Eq n \left(\frac{1}{n}\sum_{i=1}^n\IE \babs{X_i}^{2k}\right)^{1/(2k)}; \qquad
  B \Eq \left(\frac{1}{n}\sum_{i=1}^n\IE \bbabs{\sum_{j\in\mathcal{N}_i} X_j}^{2k}\right)^{1/(2k)}.
}

If, for example,  for all $i=1,\ldots, n$, $d:=\max_i\abs{\mathcal{N}_i}$
and $x:=\max_i\norm{X_i}_{2k}$, then, using Minkowski's inequality, we could also take
\be{
 A \Eq n x, \,\,\, B \Eq d x,
}
and Theorem~\ref{thm:stnmomineq} implies that
\besn{\label{eq:locdepmombd} 
\norm{W}_{2k}
   &\Le    n x \left[\left(1+\sqrt{\frac{d}{n(2k-1)}}\right)^{2k-1}-1\right] \\
   &\Le  \sqrt{n} x\sqrt{d (2k-1)} \exp\left\{ \sqrt{\frac{d(2k-1)}{n}}\right\}.
}
Corollary~\ref{ADB-bounded} can also be invoked, giving a bound on large deviation probabilities of
\ben{\label{ADB-local-dev-bnd}
   \IP[\abs{W} > t] \Le e\exp\bbbclc{-\frac{t^2}{2endx^2}\bbbclr{1 - \frac{t}{nx}\sqrt{\frac6e}}}\,,
}
for all $t \ge \sqrt{2endx^2}$. This can be compared to \cite[Theorem~2.1]{Janson2004b}, which, under the stronger condition that $\abs{X_i}\leq x$, gives
\ben{\label{eq:janloc}
\IP[\abs{W} > t] \Le 2 \exp\bbbclc{-\frac{t^2}{2ndx^2}}\,.
}
This bound is better than~\eq{ADB-local-dev-bnd} because it does not have the factor of $e$ in the denominator of the exponent. 
However, in situations where the refined inequalities such as Corollary~\ref{cor:normal} apply, our results improve on~\eq{eq:janloc}.
Scan statistics furnish standard examples of this kind. If $Y_1,Y_2,\ldots,Y_n$ are independent random variables, and, 
for $j=1,\ldots, n-\ell+1$,  we define 
\be{
    X_j \Def \phi_j(Y_j,\ldots,Y_{j+\ell-1})-\IE \phi_j(Y_j,\ldots,Y_{j+\ell-1}),
}
where $\phi_j\colon \IR^\ell\to\IR$ is uniformly bounded for all~$j$,
then $W=\sum_{j=1}^{n-\ell+1} X_j$ satisfies the hypotheses above with $d=2\ell+1$.
The special case of head runs in Bernoulli trials is an example in the next section, where we show how to improve the basic inequality.

%Sharper bounds may in general be obtained by bounding~$B$ more carefully.

\subsection{Size--bias couplings}\label{sec:sbcomp}

For a random variable $W\geq0$ with $\IE W=\mu<\infty$,
we say that~$W^s$ has the size--bias distribution of~$W$ if
\be{
   \IE f(W^s) \Eq \frac{\IE [W f(W)]}{\mu},
}
for all functions $f$ such that the right hand side is well defined.
If $(W,W^s)$ is a coupling of a distribution with its size--bias 
distribution, and we define $W':=W^s$ and $G:=\mu$, 
then $(W,W',G)$ is a Stein coupling. 
Theorem~\ref{thm:stnmomineq}  easily applies in this setting with $D := W^s - W$, so that
we can take
\be{
    A \Eq \mu, \hspace{6mm} B \Eq \left(\IE\left[\abs{W^s-W}^{2k}\right]\right)^{1/(2k)}.
}

For {\it bounded\/} size--bias couplings, where $\abs{W^s-W}\leq c$ for some constant~$c$, then
$A=\mu$ and $B\leq c$, and in this case the bound~\eq{ADB-new-5a} becomes
\ben{\label{ADB-size-bias-bound}
  \IP[|W - \mu| > t] \Le e\exp\bbbclc{-\frac{t^2}{2e\mu c}\bbbclr{1 - \frac t\mu \sqrt{\frac6e}}}.
}
This is to be compared with the best known bounds under these hypotheses, those of \cite{Arratia2015}.  A very slightly 
weaker version of their bound is
\ben{\label{ADB-AB-bound}
    \IP[|W - \mu| > t] \Le  2 \exp\left\{-\frac{t^2}{2\mu c + 2ct/3}\right\}.
}
This is better than~\eq{ADB-size-bias-bound}, because it does not have the 
factor~$e$ in the denominator of the exponent.

However, in many circumstances, Theorem~\ref{thm:stnmomineq3} can also be applied,
and Corollary~\ref{cor:normal} then yields bounds for large deviation probabilities.
The quantity~$h_k$ defined in~\Ref{ADB-h_k} is already directly expressed in terms of $A$ and~$B$.
Because the size--bias coupling yields an exact Stein coupling, $\eps_1=\eps_2=0$.  We can also
take $\eps_3=0$, in which case
\[
      \s^{-2}T_2 \Eq \frac{\mu}{\s^2}\, \bbabs{\IE\clc{D \giv W} - \frac{\s^2}{\mu}}
                 \Eq \abs{ \IE\clc{D \giv W} - \IE D }/\IE D.
\]
Thus, in order to use~\eq{ADB-new-8}, all that we need in addition is a useful bound on
the $k$-norm  $\norm{\IE\clc{D \giv W} - \IE D}_k$.  In practice, it may be easier to bound
$\norm{\IE\clc{D \giv \ff} - \IE D}_k$ for a larger $\s$-field~$\ff$, with respect to which~$W$
is measurable.  Provided this norm is sufficiently small, Corollary~\ref{cor:normal}
yields bounds in which the leading term in the exponent is improved to $-t^2/(2\s^2)$,
though typically in a restricted range of~$t$;
note that the leading term in the exponent is now typically better than $-t^2/(2\mu c)$, 
since $\s^2 = \mu\IE D$, and $\IE D \le c$,
with equality only if $\law(a_1W) = \Po(a_2)$, for some $a_1,a_2 > 0$.

For example, \cite{Ghosh2011b} derive a bound of essentially the same form as~\Ref{ADB-AB-bound}
for the number of $m$-runs in~$n$ i.i.d.\ Bernoulli trials $\xi_1,\ldots,\xi_n$,
with common success probability~$p$, in which case $c = 2m-1$.  Letting $\ff := \clc{\xi_1,\ldots,\xi_n}$,
$\IE\clc{D \giv \ff}$ can be expressed as an average of~$n$ non-negative locally dependent random variables,
each bounded above by~$2m-1$, with dependence neighbourhoods of size $3m-2$; its $2k$-norm can then be bounded
using~\Ref{eq:locdepmombd} by $5em^{3/2}\sqrt{k/n}$, if $6km \le n$.  Thus
\[
     (1 - E - h_k)^{-k} \ \approx\ 1 \qquad\mbox{if} \quad k^{5/2} \ \ll\ n^{1/2}\IE D,
\]
so that the bound in~\eq{ADB-new-8} can be used as long as $y \ll n^{1/10}$, if~$\IE D$ is bounded
away from zero (or, equivalently, if~$p$ is bounded away from~$1$).  This yields
a leading term in the exponent of $-t^2/(2\mu\IE D)$, for $t \ll \s n^{1/10}$, rather than
$-t^2/(2\mu (2m-1))$, with 
\[
    1 - p^m \Le \IE D \Eq 1 + 2\,\frac{p-p^m}{1-p} - (2m-1)p^m \Le 7(2m-1)/12,
\]
for all $p$ and $m\ge2$, and indeed with $8/9 \le \IE D \le 2$ for all $p \le 1/3$ and $m \ge 2$.
This represents a considerable improvement over~\Ref{ADB-AB-bound} in the given range of~$t$.

Thus we see that our approach can compete with best known bounds, in situations where
other bounds are available.  However,
we emphasize again that the main advantage of our approach is that it gives bounds
in many settings where other methods cannot be applied.
For size--bias couplings in which the random variable~$|W^s-W|$ is not uniformly bounded, for instance,
exponential concentration inequalities are difficult to come by; see \cite{Ghosh2011}.

\subsection{Local neighbourhood statistics of \ER\ random graphs}\label{sec:localstater}

Let $\IG_n$ be the set of simple and undirected graphs on $n$ vertices with labels $[n]:=\{1,\ldots,n\}$, 
and let $\cG_n$ be an \ER\ random graph on $\IG_n$, with edge probability $p:=\lambda/n$. Fix $r\in\IN$ and,
for $G\in\IG_n$ and each $i=1,\ldots,n$, let $\cN_r(i,G)$  be the ``$r$-neighbourhood''
consisting of the subgraph induced by all vertices  
at a distance no greater than~$r$ from vertex~$i$ in~$G$, with vertex~$i$ distinguished; note that this includes 
the edges between vertices at graph distance $r$ from vertex~$i$.
For each $i=1,\ldots,n$, let $U$ be a real-valued function on graphs on at most~$n$ vertices having a distinguished vertex, 
% where the vertex labels are subsets of~$[n]$ that contain the label~$i$, 
and set
\be{
   X_i \Def U\bclr{\cN_r(i,\cG_n)};\qquad W \Def \sum_{i=1}^n X_i.
}
Concrete examples are given by taking $U$ to be any function of the degree of 
the distinguished vertex, with $r=1$, or the number of copies of some fixed subgraph~$H$ containing the distinguished vertex, with $r=\mathrm{diameter}(H)$. In the latter case, $W$ equals the number of occurrences of $H$ in the graph, times the number of vertices of $H$.

For sparse \ER\ random graphs, $r$-neighbourhoods are small with high probability. Hence, as long as~$U$ 
is well behaved, $\norm{D}_{2k}$ should be of a good order. We illustrate this principle in the following 
theorem.
%Note that in the application, we are able to bound central moments without computing the mean.
For a graph $G$, let $V(G)$ denote its vertex set, and $\abs{V(G)}$ the number of its vertices.

\begin{theorem}\label{thm:nhoodfuncmombd}
Fix $r\in\IN$ and $\b \ge 0$, and
let~$U$ be a function on graphs with a distinguished vertex, as above,
such that, for some positive constant $c$
and any graph~$G$ with a distinguished vertex in its domain,
\ben{\label{eq:uibd}
    \abs{U(G)}\Le  c \abs{V(G)}^\b.
}
Fix $\lambda>0$ and let $\cG_n$ be an \ER\ random graph on $\IG_n$, with edge probability $p:=\lambda/n$, and define
\be{
     X_i \Def U\bclr{\cN_r(i,\cG_n)}
}
and $W=\sum_{i=1}^n X_i$.
Then 
\be{
  n^{-1/2} \norm{W-\IE W}_{\rrr}\Le  c C(r,\b) \max\{\l,q(1+\beta)\}^{(1+2\b)r + 1/2},
}
where
\be{
C(r,\beta)=\sqrt{2\bclr{20^{1+\beta}+4^{1+\beta}}}\left(\frac{\pi e^{e-2}}{\log(e-1)}\right)^{(1+2\beta r)}.
}
\end{theorem}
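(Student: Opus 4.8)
The plan is to apply Theorem~\ref{thm:stnmomineq} to the local‑dependence Stein coupling $(W,W',G)$ with $W'=W_I$, $G=-nX_I$ and $D=-\sum_{j\in\cN_I\sp r}X_j$, where $\cN_i\sp r$ is the set of vertices appearing in $\cN_r(i,\cG_n)$. Here $\eps=0$ and $T=0$, so the bound reduces to $\norm{W-\IE W}_\rrr \le \sqrt{(\rrr-1)\CCC B}\exp\{\sqrt{(\rrr-1)B/\CCC}\}$ with $\CCC := n\norm{X_I}_\rrr$ (since $|G|=n|X_I|$) and $B := \norm{D}_\rrr = \bclr{n^{-1}\sum_i \IE|\sum_{j\in\cN_i\sp r}X_j|^\rrr}^{1/\rrr}$. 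Because $\CCC/n$ is bounded while $B$ stays of constant order, the exponential factor is $O(1)$, and the whole task is to bound the two moments $\norm{X_I}_\rrr$ and $\norm{D}_\rrr$ by powers of $\max\{\lambda,\rrr(1+\beta)\}$.

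The core estimate I would establish first is a tail bound on the size of the $r$‑neighbourhood: there exist constants so that $\IP[|V(\cN_r(i,\cG_n))|>m]$ decays fast enough (roughly like a subcritical branching‑process / Chernoff bound on a Bin$(n,\lambda/n)$ exploration, iterated $r$ times) that all moments $\IE|V(\cN_r(i,\cG_n))|^{s}$ are bounded by $C(r)\max\{\lambda,s\}^{rs}$ or similar. The cleanest route is: dominate the exploration of $\cN_r(i,\cG_n)$ by $r$ generations of a Galton–Watson tree with Bin$(n,\lambda/n)$ (hence $\le$ Poisson$(\lambda)$, stochastically) offspring, use a Chernoff/moment bound to control the total progeny through $r$ generations, and read off $\IE|V(\cN_r(i,\cG_n))|^{s} \le (C\max\{\lambda,s\})^{rs}$ for all $s$. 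This gives, via \eqref{eq:uibd}, that $\IE|X_i|^{s}\le c^{s}\IE|V(\cN_r(i,\cG_n))|^{\beta s}\le c^{s}(C\max\{\lambda,s\})^{r\beta s}$, and hence an estimate of the form $\norm{X_I}_\rrr \le c\,C'(r)\max\{\lambda,\rrr(1+\beta)\}^{r\beta}$, which is the bound \eqref{G-norm-bnd} referred to in the statement (so $\CCC \le n\cdot c\,C'\max\{\lambda,\rrr(1+\beta)\}^{r\beta}$).

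For $B$, I would bound $|D|=|\sum_{j\in\cN_I\sp r}X_j|\le |V(\cN_r(I,\cG_n))|\cdot \max_{j\in\cN_I\sp r}|X_j|$, and then, using \eqref{eq:uibd} again together with the fact that each $X_j$ with $j\in\cN_r(I,\cG_n)$ is a bounded function of $\cN_{2r}(I,\cG_n)$ (the $r$‑neighbourhood of a vertex within distance $r$ of $I$ lies inside the $2r$‑neighbourhood of $I$), dominate $|D|$ by $c\,|V(\cN_{2r}(I,\cG_n))|^{1+\beta}$. Applying the same branching‑process tail/moment estimate with $2r$ generations yields $\IE|D|^\rrr \le \bclr{c\,C''(r)\max\{\lambda,\rrr(1+\beta)\}^{2r(1+\beta)}}^{\rrr}$, i.e.\ $B \le c\,C''(r)\max\{\lambda,\rrr(1+\beta)\}^{2r(1+\beta)}$; this is \eqref{D-norm-bnd}. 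Inserting these two bounds into $\sqrt{\CCC B}$ (and checking $\sqrt{(\rrr-1)B/\CCC}=O(1)$ so the exponential is absorbed into the constant), the $n^{-1/2}$ cancels the $\sqrt{n}$ from $\sqrt{\CCC}$, and collecting the powers of $\max\{\lambda,\rrr(1+\beta)\}$ — namely $r\beta$ from $\CCC/n$ and $2r(1+\beta)$ from $B$, halved, plus a spare $1/2$ to cover the $\sqrt{\rrr-1}$ and lower‑order slack — gives the exponent $(1+2\beta)r + 1/2$ claimed, with $C(r,\beta)$ collecting $c$‑free constants.

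The main obstacle is the neighbourhood‑size moment bound with the right dependence on both $r$ and the moment order $\rrr$: one must keep the exploration domination honest (it is an exploration of a graph, not a tree, but the tree dominates stochastically), and, more delicately, track constants through $r$ (or $2r$) iterations of ``take a Poisson$(\lambda)$‑ish sum'' so that the final bound is a clean power of $\max\{\lambda,\rrr(1+\beta)\}$ rather than, say, something like $\rrr^{r\rrr}$ that would be too weak — the point being that for $\rrr\lesssim\lambda$ the $\lambda^{rs}$ behaviour dominates, while for $\rrr\gtrsim\lambda$ the $\rrr^{rs}$ behaviour does, and the $\max$ captures both regimes uniformly. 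Everything else (Minkowski for the sum defining $D$, Hölder to pass between $\rrr$‑norms and $\beta\rrr$‑norms, and the bookkeeping in $\sqrt{\CCC B}$) is routine.
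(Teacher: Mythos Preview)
Your proposal has a genuine gap at the very first step: the triple you write down is \emph{not} a Stein coupling. The local-dependence construction of Section~4.2 requires \emph{deterministic} neighbourhoods $\cN_i$ with $X_i$ independent of $\sum_{j\notin\cN_i}X_j$. Here your $\cN_i^r=V(\cN_r(i,\cG_n))$ is random, and $X_i$ is \emph{not} independent of $\sum_{j\notin\cN_r(i)}X_j$: already for $r=1$ and $U$ the degree, the number of terms in that sum equals $n-1-\deg(i)$, so the sum is visibly correlated with $X_i=\deg(i)$. Enlarging to $\cN_{2r}(i)$ does not help, because the index set remains a random function of the same edges that determine $X_i$. (A smaller point: the $X_i$ are not centred, so $G$ would have to be $-n(X_I-\IE X_I)$, not $-nX_I$.)

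The paper's coupling is genuinely different and is the missing idea. For each $j$ it \emph{resamples} every edge incident to $V(\cN_r(j,\cG_n))$ with fresh independent copies, producing a graph $\cG_n^{(j)}$; one then sets $W^{(j)}=\sum_i U(\cN_r(i,\cG_n^{(j)}))$, $W'=W^{(J)}$ and $G=-n(X_J-\IE X_J)$. The point is that $X_j$ is a function of the edges that get replaced, while $W^{(j)}$ is a function of the remaining edges together with the fresh ones, so $X_j\indep W^{(j)}$ and the pair is an exact Stein coupling with $\law(W')=\law(W)$. The price is that $D=W^{(J)}-W$ now involves both the removal of the old $r$-neighbourhood \emph{and} the regrowth of a new one through the resampled edges; bounding $\|D\|_q$ therefore requires controlling a hybrid $3r$-type neighbourhood (the paper's $N^*_{3r}$), which is why the $\|D\|_q$ estimate carries exponent $2r+3r\beta$ rather than the $2r(1+\beta)$ you sketch. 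Your branching-process domination for neighbourhood sizes is the right tool (it is Lemma~\ref{lem:nhoodmombd}); what needs to change is the coupling and, consequently, the analysis of $D$.
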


\noindent For some basic examples, if $U$ is the indicator that the degree of the distinguished vertex~$i$ is in some set, 
we set $r=1$ and  can take $c=1$ and~$\b=0$ in~\eq{eq:uibd}.  If~$U$ is the number
of copies of a fixed subgraph~$H$ containing~$i$, we set $r=\mathrm{diameter}(H)$ and can take $c = \abs{V(H)}$ and $\b =\abs{V(H)}-1$.
These examples are standard, and may also be handled by other methods.  However, the theorem applies to more exotic 
statistics, such as the number of vertices having at least degree~$d$ with at most~$k$ neighbours having degree 
no greater than~$d$ ($r=2$, $c=1$, $\beta=0$). 

If~$X$ is a random variable such that $\|X-\IE X\|_{k} \le (Ck)^\a$ for all $k \ge k_0$, then
it follows from Markov's inequality that
\[
   \IP[|X-\IE X| \ge t] \Le \{(Ck)^\a/t\}^k,\quad k\ge k_0.
\]
Taking $k = C^{-1}(t/e)^{1/\a}$, this gives
\[
    \IP[|X - \IE X| \ge t] \Le e^{-C^{-1}(t/e)^{1/\a}}, \quad  t \ge e(Ck_0)^\a.
\]
Here, we take $X := W/\{cC(r,\b)\sqrt n\}$, $C := (1+\b)$, $k_0 := \l/(1+\b)$ and $\a := (1+2\b)r + 1/2$.
In particular, if $n\to\infty$ and $r$, $U$ and~$\l$ remain the same, then $W_n - \IE W_n$
is weakly concentrated on the scale $\sqrt n \{\log n\}^{(1+2\b)r + 1/2}$.

%\ignore{
%Then there is a Stein coupling such that the bounds on $\norm{W-\IE W}_{2k}$ of Theorems~\ref{thm:stnmomineq} 
%and~\ref{thm:stnmomineq0} apply, with $\eps=\eps'=T=0$,
%\be{
%     \norm{G}_{2k} \Le  n c \bclr{A(\lambda, 2k)^r+ A(\lambda, 1)^r},
%}
%and
%\bes{
%\norm{D}&_{2k}\Le  
%c\bbbclc{ \bclr{\pi e^{e-2}}^{1/(2k)}\bbbclr{\bbcls{\bclr{\tsfrac{2k}{\log(e-1)}}+
%	\bclr{\tsfrac{4k}{\log(e-1)}}^2}A(\lambda,2k)^{2 r}+A(\lambda,4k)^{4r}+A(\lambda,6k)^{6r} \\
%	&\qquad\quad+\bbcls{\bclr{\tsfrac{4k}{\log(e-1)}}+A(\lambda,4k)^{2r}}^2A(\lambda,4k)^{r-1}+2\bbcls{\bclr{\tsfrac{2k}{\log(e-1)}}+A(\lambda,2k)^{2r}} A(\lambda,2k)^{r-1}} \\
%	&\qquad\qquad\qquad\qquad+2A(\lambda,4k)^{6r}},
%}
%where 
%\be{
%    A(x,\ell) \Def \pi e^{e-2}\times
%\begin{cases}
%  \ell/\log\bclr{(e-1)\frac{\ell}{x}}, & \ell>x, \\
%     x, &\ell\leq x.
%    \end{cases}
% }   
% In particular,
%}

\section{Proofs}\label{sec:proofs}

In this section we prove the previous results.

\subsection{Proofs for general inequalities}

 \begin{proof}[Proof of Theorem~\ref{thm:stnmomineq}]
For $f(w)=(w-\mu)^{2k-1}$, using~\eq{eq:stncoup}, we have
\be{
   \IE[ G\{f(D+W)-f(W)\}] \Eq \IE \left[\left(W-\mu\right)^{2k}\right] + \IE \left[R \left(W-\mu\right)^{2k-1}\right].
}
%\ignore{
%Now using the assumption~\eq{eq:remcond} and the following inequalities from Jensen's (twice),
%\be{
%  \sigma\IE\babs{(W-\mu)^{2k-1}}\Le  \IE\bcls{(W-\mu)^2}^{1/2} \IE\bcls{(W-\mu)^{2k}}^{1-1/(2k)}\Le  \IE\bcls{(W-\mu)^{2k}},
%}
%}
Hence
\be{
   (1-\eps)\IE \left[\left(W-\mu\right)^{2k}\right]
      \Le \abs{\IE[ G\{f(D+W)-f(W)\}]} + \IE \left[T \babs{W-\mu}^{2k-1}\right].
}
%\note{
%Assuming $\norm{T}_{2k}\leq \sigma \eps'$, we can further upper bound 
%$\IE \left[T \babs{W-\mu}^{2k-1}\right]\leq \eps' \IE \left[\left(W-\mu\right)^{2k}\right]$, leading to the bound
%\be{
%(1-\eps-\eps')\IE \left[\left(W-\mu\right)^{2k}\right]
%      \Le \abs{\IE[ G\{f(D+W-\mu)-f(W-\mu)\}]} 
%}
%xxxx Maybe break out the last two displays as a lemma? Perhaps with the further decompositions in theorems below?xxx
%}
Furthermore, by the binomial theorem, we have
\be{
    f(D+W)-f(W) \Eq \sum_{j=1}^{2k-1} \binom{2k-1}{j} D^j (W-\mu)^{2k-1-j}.
}
Now, setting $x:=\|W-\mu\|_{2k}^2$ and using the triangle inequality, it follows that
\besn{\label{eq:key}
     (1-\eps)x^k \Le \sum_{j=1}^{2k-1}\binom{2k-1}{j} 
         \IE\left[\left|G D^j (W-\mu)^{2k-j-1}\right|\right] +  \IE \left[T \babs{W-\mu}^{2k-1}\right].
}
Using H\"older's inequality, and because $\|G\|_{2k} \le \CCC$ and $\|D\|_{2k} \le B$, this gives
\bes{
     (1-\eps) x^k &\Le \CCC \sum_{j=1}^{2k-1} \binom{2k-1}{j} B^{j} x^{\frac{2k-j-1}{2}} + 
                 \|T\|_{2k}x^{k-1/2} \\
     &\Eq x^{k-\frac{1}{2}}\left[\CCC \sum_{j=1}^{2k-1} \binom{2k-1}{j} \left(\frac{B}{\sqrt{x}}\right)^{j} 
                    + \|T\|_{2k} \right].
}
Rearranging, we thus have
\besn{\label{1}
    \sqrt{x} &\Le \frac{\CCC}{1-\eps} \sum_{j=1}^{2k-1} \binom{2k-1}{j} \left(\frac{B}{\sqrt{x}}\right)^{j} 
                  + \frac{\|T\|_{2k}}{1-\eps} \\
      &\Eq \frac{\CCC}{1-\eps} \left[\left(1+\left(\frac{B}{\sqrt{x}}\right)\right)^{2k-1}-1\right] 
                      + \frac{\|T\|_{2k}}{1-\eps}.
}
The solutions~$x$ to~\eq{1} that also satisfy $x> (2k-1) \CCC B/(1-\eps)$ are seen, by substituting  this bound into 
the right hand side of~\eq{1}, to be such that
\ben{\label{1.5}
   \sqrt{x} \Le \frac{\CCC}{1-\eps} \left[\left(1+\sqrt{\frac{B(1-\eps)}{\CCC(2k-1)}}\right)^{2k-1}-1\right]
               + \frac{\|T\|_{2k}}{1-\eps}.
}
Since, expanding the power, inequality~\eq{1.5} is satisfied by all $x \le (2k-1) \CCC B/(1-\eps)$, it follows
that all solutions of~\eq{1} satisfy~\eq{1.5}, 
proving the first inequality. The second follows from the fact that, for $t\geq 0$ and $\gamma\geq0$, 
$(1+t)^\gamma-1\leq \gamma t e^{\gamma t}$.
\end{proof}

\begin{proof}[Proof of Theorem~\ref{thm:stnmomineq0}]
First, take $r=2k$ with $k\in\IN$.  Then, using~\eq{eq:stncoup} with $f(w)=(w-\mu)^{2k-1}$, we have
\be{
   \IE[ G\{f(D+W)-f(W)\}] \Eq \IE \left(W-\mu\right)^{2k} + \IE \clc{R \left(W-\mu\right)^{2k-1}}.
}
Invoking the assumption~\eq{eq:remcond} and H\"older's inequality thus leads to
\bes{
   (1-\eps)\IE \left(W-\mu\right)^{2k}
      &\Le \abs{\IE[ G\{f(D+W)-f(W)\}]} + \IE \bclc{T \babs{W-\mu}^{2k-1}} \\
      &\Le \abs{\IE[ G\{f(D+W)-f(W)\}]} + \s^{-1}\|T\|_{2k}\clc{\s\|W-\mu\|_{2k}^{2k-1}} .
}
Now, for this choice of~$f$, using the fundamental theorem of calculus,  we have
\bes{
     \abs{f(D+W)-f(W)} &\Le (2k-1)|D|\max\{(W-\mu)^{2k-2},(W'-\mu)^{2k-2}\} \\
                               &\Le (2k-1)|D|\{(W-\mu)^{2k-2} + (W'-\mu)^{2k-2}\}.
}
Hence, by H\"older's inequality, and because $\IE\clc{(W'-\mu)^{2k}} \leq \IE\clc{(W-\mu)^{2k}}$,
it follows that
\bes{
    (1-\eps-\eps')\|W-\mu\|_{2k}^{2k} 
              &\Le (2k-1)\|G\|_{2k}\|D\|_{2k}\{ \|W-\mu\|_{2k}^{2k-2} + \|W'-\mu\|_{2k}^{2k-2}\}\\
                       &\Le 2(2k-1)\|G\|_{2k}\|D\|_{2k} \|W-\mu\|_{2k}^{2k-2},
}
and the theorem is proved for $r=2k$.

For $r=2k+1$, take $f(w) = (w-\mu)^{2k}\sgn(w-\mu)$, and observe that now
\bes{
     \abs{f(D+W)-f(W)} &\Le 2k|D|\max\{|W-\mu|^{2k-1},|W'-\mu|^{2k-1}\} \\
                               &\Le 2k|D|\{|W-\mu|^{2k-1} + |W'-\mu|^{2k-1}\};
}
the remainder of the argument is the same.
\end{proof}

\begin{proof}[Proof of Theorem~\ref{thm:stnmomineq2}]
 Much as in the proof of Theorem~\ref{thm:stnmomineq}, we begin by observing that
\bes{
  \lefteqn{ \IE \bclc{(W-\mu)^{2k}} + \IE \bclc{R \left(W-\mu\right)^{2k-1}} }\\
   &\Eq \sum_{j=1}^{2k-1} \binom{2k-1}{j}  \IE\bclc{GD^j(W-\mu)^{2k-1-j}} \\
   &\Eq (2k-1)\s^2 \IE\bclc{(W-\mu)^{2k-2}} + (2k-1)\IE\bclc{\bclr{\IE\clr{GD \giv W} - \s^2}(W-\mu)^{2k-2}} \\
   &\hskip1in + \sum_{j=2}^{2k-1} \binom{2k-1}{j}  \IE\bclc{GD^j(W-\mu)^{2k-1-j}}.
}
Hence, using the extra assumption, we have
\besn{\label{ADB-1}
%   \lefteqn{ 
   (1 - \eps_1-&\eps_2 - (2k-1)\eps_3)\IE \bclc{\left(W-\mu\right)^{2k}} 
   %}
   \\
        &\qquad \Le (2k-1)\s^2 \IE\bclc{(W-\mu)^{2k-2}} 
                  + (2k-1)\IE \bclc{T_2 \clc{W-\mu}^{2k-2}} \\
        &\hskip1in \mbox{} + \sum_{j=2}^{2k-1} \binom{2k-1}{j}  \IE\babs{GD^j(W-\mu)^{2k-1-j}}.
}
Defining $x := \|W-\mu\|_{2k}^2 \ge \s^2$, and bounding the final sum as in the proof of Theorem~\ref{thm:stnmomineq}, we deduce that
\bes{
    (1 - \eps_1 -\eps_2- (2k-1)\eps_3)x^k &\Le
          (2k-1)\s^2 x^{k-1} + (2k-1)\|T_2\|_k x^{k-1} \\
      &\hskip2cm\mbox{} +  \CCC x^{k-1/2}\bbbclc{\bbclr{1 + \frac B{\sqrt{x}}}^{2k-1}
               - 1 - \frac{(2k-1)B}{\sqrt{x}}} .
}
Using the fact that, for $a \ge 2$ and $y > 0$, we have
\ben{\label{eq:2tay}
    (1+y)^a - 1 -ay \Le \half a(a-1)y^2(1+y)^{a-2} \Le \half a(a-1)y^2 e^{ay},
}
it follows that
\bes{
   \lefteqn{ \bclr{1 - \eps_1-\eps_2 - (2k-1)\eps_3} x }\\
     &\qquad\Le (2k-1)\bbbclc{\s^2\bbbclr{1 + \frac{(k-1)\CCC B^2}{\s^2 \sqrt x} \exp\bbclc{\frac{(2k-1)B}{\sqrt x}}} 
                       + \|T_2\|_k},
}
from which, considering first the case $x \ge (2k-1)\s^2$, it follows that
\bes{
   x &\Le \frac{(2k-1)\s^2}{1-\eps_1-\eps_2-(2k-1)\eps_3}
      \bbbclr{1 + \frac{(k-1)\CCC B^2}{\s^3\sqrt{2k-1}} \exp\bbclc{\frac{B\sqrt{2k-1}}{\s}} + \frac{\|T_2\|_k}{\s^2}},
}
as claimed.
\end{proof}

\begin{proof}[Proof of Theorem~\ref{thm:stnmomineq3}]
We begin from~\eq{ADB-1}. Defining $x_k := \|W-\mu\|_{2k}^2\geq \sigma^2$,  we now deduce that
\besn{\label{ADB-2}
    (1 - E)x_k^k &\Le
          (2k-1)\s^2 x_{k-1}^{k-1} +  \CCC x_k^{k-1/2}\bbbclc{\bbclr{1 + \frac B{\sqrt{x_k}}}^{2k-1}
               - 1 - \frac{(2k-1)B}{\sqrt{x_k}}}.
}
Using the Taylor expansion argument~\eq{eq:2tay}, we have
\bes{
   \CCC x^{k-1/2}\bbbclc{\bbclr{1 + \frac B{\sqrt{x}}}^{2k-1}
               - 1 - \frac{(2k-1)B}{\sqrt{x}}} &\Le x^k H_k(x),
}
where
\[
     H_k(x)  \Def x^{-3/2}(2k-1)(k-1)\CCC B^2 \exp\clc{(2k-1)B/\sqrt x}. 
\]
Because $\s \ge B\sqrt{e(2k-1)}$, and from Lemma~\ref{lem:asymnormoms}, we have
\[
     H_k(\s^2 \|N\|^2_{2k}) \Le  h_k.
\]
 It thus follows from~\eq{ADB-2} that either $\sqrt{x_k} \le \s\|N\|_{2k}$ or
\[
    (1-E-h_k)x_k^k \Le (2k-1)\s^2 x_{k-1}^{k-1}.
\]
Using the same argument, and because $h_k$ is increasing in~$k$, we deduce that, for each $2 \le l\le k$,
\[
    x_l^l \Le \max\bbclc{\frac{(2l-1)\s^2}{1-E-h_k} x_{l-1}^{l-1},\clr{\s\|N\|_{2l}}^{2l}}.
\]
Iterating this inequality, starting with $l=k$ and working downwards, and noting 
that $\|N\|_{2l}^{2l}=(2l-1)\|N\|_{2(l-1)}^{2(l-1)}$, we obtain
\[
     x_k^k \Le (1-E-h_k)^{-k}\clr{\s\|N\|_{2k}}^{2k},
\]
proving the theorem.
\end{proof}

\subsection{Proofs for concentration inequalities}
\begin{proof}[Proof of Corollary~\ref{ADB-bounded}]
If $|D| \le x_1$ and $|n^{-1}G| \le x_2$ a.s.\ for all~$n$, in order to exploit~\eq{ADB-new-0}, we start by minimizing
\[
     \log\bbbclc{t^{-2k}\bbclc{2knx_1x_2}^k} \Eq -2k \log t + k\{\log n + \log (2k) + \log (x_1x_2)\}
\]
in~$k$.  The calculus minimum is attained when
\[
    \log n + \log (2k) + \log (x_1x_2) - 2\log t + 1 \Eq 0.
\]
Thus, choosing $k := k_n(t) := \bfloor{\half\{t^2/(nx_1x_2e)\}}$, this gives
\[
     (2k)^k \Le \bbbclc{\frac{t^2}{nx_1x_2e}}^k,
\]
and hence
\[
   t^{-2k}\bbclc{n(2k-1)x_1x_2}^k \Le e^{-k} \Le e\exp\{-\half t^2/(nx_1x_2e)\}.
\]
%because $\half\{t^2/(nx_1x_2e)\} \ge 1$. 
Substituting this into~\eq{ADB-new-0} gives
\ben{\label{ADB-new-5}
   \IP[|W_n - \mu_n| > t] \Le e\exp\bbbclc{-\frac{t^2}{2nx_1x_2e}
           \bbbclr{1 - \frac{2}{\sqrt n}\,\sqrt{\frac{(2k-1)x_1}{x_2}}}},
}
with $k$ as above. Since %$\half\{t^2/(nx_1x_2e)\} \ge 1$, we have
\be{
     2k-1 \Le 2k \Le \frac{t^2}{nx_1x_2 e},
}
\eq{ADB-new-5} is bounded as claimed.
\end{proof}

\begin{proof}[Proof of Corollary~\ref{ADB-Weibull-tails}]
\mbox{}
Considering~\eq{ADB-new-0}, we note that we can take 
\[
     \a_{n,k} \Eq \b_{n,k} \Eq \bbbclc{\frac{2kc}{a}\,\bbclr{b^{-1/a}}^{2k} \Gamma(2k/a)}^{1/2k},
\]
from Lemma~\ref{moments-from-tails}.
We then choose~$k$ to (approximately) minimize
\[
   -2k \log t + k\{\log n + \log (2k)\} + \log\bbbclc{\frac{2kc}{a}\,\bbclr{b^{-1/a}}^{2k} \Gamma(2k/a)},
\]
or, more simply, to (approximately) minimize, using a crude approximation of the gamma function along the lines of Stirling's formula,
\[
   -2k \log t + k\{\log n + \log (2k)\} - (2k/a)\log b + (2k/a)\log(2k/a) - 2k/a.
\]
The calculus minimum is attained when
\[
   \log n + \log (2k) +1 - 2\log t + (2/a)\{-\log b + \log(2k/a)\} \Eq 0,
\]
suggesting the choice, provided that $t^2 \ge ne 2^{(a+2)/a}(ab)^{-2/a}$ so that $k\geq1$, of
\ben{\label{ADB-new-5.4}
   k \Eq \bbbfloor{\half\bbbclr{\frac{t^2}{ne}}^{a/(a+2)}(ab)^{2/(a+2)}},
}
for which 
\ben{\label{ADB-new-5.5}
    (2k)^{k(a+2)/a} \Le \bbbclc{\frac{t^2}{ne}}^k (ab)^{2k/a}.
}
Then, using the bound on the  gamma function from \cite[Corollary~1.2]{Batir2008},  the right hand side of~\eq{ADB-new-0} is at most 
\bes{
   \lefteqn{  t^{-2k}n^k(2k)^{k} c\,\bbclr{b^{-1/a}}^{2k} \Gamma(2k/a+1) \exp\{n^{-1/2}(2k)^{3/2}\} }\\
    &\qquad\Le t^{-2k}n^k(2k)^{k} c\,\bbclr{b^{-1/a}}^{2k}\,\sqrt{2\pi} \,\bbbclr{\frac{2k}{a}}^{2k/a}   \bbbclc{\frac{2k}{a}+\frac{1}{2}}^{1/2}      e^{-2k/a}\,\exp\{n^{-1/2}(2k)^{3/2}\} \\
    &\qquad\Le c\sqrt{2\pi}\, \bbbclc{\frac{2k}{a}+\frac{1}{2}}^{1/2}  e^{-k(a+2)/a}\,\exp\{n^{-1/2}(2k)^{3/2}\},
}
because of~\eq{ADB-new-5.5}.
The result follows after noting that~$k$ from~\eq{ADB-new-5.4} equals $k_t$ from  the statement of the corollary.
\end{proof}

\begin{proof}[Proof of Corollary~\ref{cor:normal}]
Using the upper and lower factorial bounds given by \cite{Robbins1955}, we have
\[
    \|N\|_{2k}^{2k} \Eq \frac{(2k)!}{k!2^k} \Le \exp\{k\log k + (k+\half)\log 2 - k\},
\]
and so 
\[
     (2k)^{-k}\|N\|_{2k}^{2k} \Le \exp\{\half\log2 - k\} \Eq \sqrt 2\, e^{-k}.
\]
Let $k = \bfloor{y^2/2}$ and take $t = \s_n \sqrt{2k}$  in~\eq{eq:Mineq}; then, using Theorem~\ref{thm:stnmomineq3} and the previous display,
\[
   \IP[|W_n-\mu_n| > y\s_n] \Le \IP[|W_n-\mu_n| > \s_n\sqrt{2k}] \Le \frac{\sqrt2\, e^{-k}}{(1 - E - h_{k})^{k}} \Le 
         \frac{e\sqrt2\, e^{-y^2/2}}{(1 - E - h_{k})^{k}},
\]
proving the corollary.
\end{proof}

\subsection{Proofs for applications}

\paragraph{Independent sums}

\begin{proof}[Proof of Proposition~\ref{ADB-independent}]
As in the proof of Theorem~\ref{thm:stnmomineq} (but with $R=0$), we begin with
\besn{\label{ADB-3}
   \IE \bclc{W^{2k}} 
   &\Eq \sum_{j=1}^{2k-1} \binom{2k-1}{j} (-1)^j \IE\bclc{nX_I^{j+1} W^{2k-1-j}}.
}
For the first term in the sum, we have
\[
    (2k-1)\IE\bclc{nX_I^{2}W^{2k-2}} \Eq (2k-1)\sum_{i=1}^n \IE\bclc{X_i^2 W^{2k-2}}.
\]
By the independence of $X_i$ and~$W_i:=W-X_i$, we have
\[
   \IE\bclc{(X_i^2 - \s_i^2)W^{2k-2}} \Eq \IE\bclc{(X_i^2 - \s_i^2)\{W^{2k-2} - W_i^{2k-2}\}},
\]
and so, using the fundamental theorem of calculus, we have
\bes{
   \lefteqn{ \babs{\IE\bclc{(X_i^2 - \s_i^2)W^{2k-2}}} }\\
      &\qquad\Le \IE\bclc{|X_i^2-\s_i^2||X_i|(2k-2)\{|W|^{2k-3} + |W_i|^{2k-3}\}}.
}
Now, Jensen's inequality implies 
\be{
 \IE |W_i|^{2k}= \IE \bbcls{\babs{W_i + \IE[X_i | W_i]}^{2k}} \leq 
\IE \bbcls{ \IE\bcls{\abs{W_i + X_i}^{2k} \vert W_i }}=\IE\bcls{\abs{W}^{2k}},
}
and using this with H\"older's inequality yields
\[
    \IE\bclc{|X_i^3|\{|W|^{2k-3} + |W_i|^{2k-3}\}} \Le 2\|X_i\|_{2k}^3 \|W\|_{2k}^{2k-3},
\]
and $\s_i^2\IE\bclc{|X_i|\{|W|^{2k-3} + |W_i|^{2k-3}\}}$ is bounded by the same quantity.
Hence
\bes{
    \lefteqn{ (2k-1)\babs{\IE\bclc{nX_I^{2}W^{2k-2}} - \s^2\IE\bclc{W^{2k-2}}} }\\
            &\qquad\Le 8(k-1)(2k-1)\bbclc{\sum_{i=1}^n \|X_i\|_{2k}^3} \|W\|_{2k}^{2k-3} \\
            &\qquad\Le 8n(k-1)(2k-1) \|X_I\|_{2k}^3 \|W\|_{2k}^{2k-3}.
}
The remaining terms in the sum in~\eq{ADB-3} are equal to
\bes{
 nX_I\bcls{(W-X_I)^{2k-1} - W^{2k-1} + (2k-1) X_I W^{2k-2} },
 }
 which can be bounded in absolute value, using Taylor's theorem with integral remainder, by  
 \bes{ 
  \lefteqn{ (2k-1)(k-1)\IE\bclc{n|X_I|^3 \{|W|^{2k-3} + |W'|^{2k-3}\}} }\\
          &\qquad\Le 2n(k-1)(2k-1) \|X_I\|_{2k}^3 \|W\|_{2k}^{2k-3}.
}
We have thus shown that
\be{
\norm{W}_{2k}^{2k}\Le  n(2k-1)\sigma^2 \norm{W}_{2k-2}^{2k-2}+
\|W\|_{2k}^{2k}\left( 10n^{-1/2}(k-1)(2k-1) \rho_{k}^3 \frac{\sigma^3}{\|W\|_{2k}^{3}}\right).
}
So if $\norm{W}_{2k}\geq \sigma \norm{N}_{2k}$, then
\be{
    10n^{-1/2}(k-1)(2k-1) \rho_{k}^3 \frac{\sigma^3}{\|W\|_{2k}^{3}}
       \Le 5\sqrt{2e^3}\rho_k^3 \sqrt{\frac{k-1}n} \Eq h_k'.
}
Arguing as in the proof of Theorem~\ref{thm:stnmomineq3} completes the proof.
\end{proof}

\paragraph{Sums of local statistics of sparse \ER\ random graphs}
\begin{proof}[Proof of Theorem~\ref{thm:nhoodfuncmombd}]
%Let the set of vertices of~$\cG_n$ be denoted by $[n] := \{1,2,\ldots,n\}$.
To obtain moment bounds for~$W$, we define a Stein coupling. For each $j=1,\ldots,n$,
set
$$
      \II_r\sp{j} \Def \bclc{\{i,l\} \subset [n]\colon\, \{i,l\} \cap \cN_r(j) \neq \emptyset}.
$$
Let $(E'_{il}, \{i,l\} \subset [n])$ be independent indicators, independent
also of~$\cG_n$, each with $\IE E'_{il} = p$.
Given~$\cG_n$, define the random graph~$\cG_n\sp{j}$ by replacing all the 
edge indicators $(E_{il},\, \{i,l\} \in \II_r\sp{j})$, between pairs of vertices with at least one vertex 
in the $r$-neighbourhood of~$j$, with $(E'_{il},\, \{i,l\} \in \II_r\sp{j})$,
leaving the edges between other pairs of vertices the same as those of $\cG_n$.
Define
\be{
      X_i\sp{j} \Def U\bclr{\cN_r(i,\cG_n\sp{j})},
}
and $W\sp{j}:=\sum_{i=1}^n X_i\sp{j}$.
Finally, let $J$
be uniform on the set~$\{1,\ldots,n\}$, independent of the random objects above, and define
$W':=W\sp{J}$ and $G=-n\clr{ X_J-\IE X_J}$. After noting that $X_j$ is independent of 
$W\sp j$, it is easy to see that $(W,W',G)$ is an exact Stein coupling.
Moreover, $\law(W')=\law(W)$, so that the central moments of $W$ and~$W'$ are equal, and hence both
of Theorems~\ref{thm:stnmomineq} and~\ref{thm:stnmomineq0}
apply, with $\eps = \eps' = 0$; we use Theorem~\ref{thm:stnmomineq0}.

To apply it to bound the $\rrr$-th central moment, we need to bound
\ban{
   \norm{G}_{\rrr}^{\rrr}&\Eq n^{\rrr-1}\sum_{i=1}^n \IE|X_i-\IE X_i|^{\rrr},
%=n^{r}\IE\bcls{(X_1-\IE X_1)^{r}}
 \label{eq:intt1a}
}
and
\ban{
     \norm{D}_{\rrr}^{\rrr} &\Eq \frac{1}{n}\sum_{j=1}^n \IE\babs{W\sp{j}-W}^{\rrr} \notag\\
     &\Eq \frac{1}{n}\sum_{j=1}^n \IE\bbbabs{\sum_{i=1}^n (X_i\sp{j}-X_i)}^{\rrr}
%     \Eq  \IE\bbbabs{\sum_{i=1}^n (X_i\sp{1}-X_i)}^{\rrr}.
%\notag\\
%	&=\IE\bbbcls{\bbclr{\sum_{i=1}^n (X_i\sp{1}-X_i)}^{2k}}, 
\label{eq:intt1b}
}
% \blue{We assume in what follows that $X_i \ge 0$;  in the general case, consider
% $\max\{X_i,0\}$ and $-\min\{X_1,0\}$ separately. }
% where the final equalities are by symmetry.

To bound~\eq{eq:intt1a}, define $N_r(i):=\abs{V(\cN_r(i,\cG_n))}$ to be 
the number of vertices in the $r$-neighbourhood of $i$ in $\cG_n$, and 
note that, due to~\eq{eq:uibd},
\be{
    |X_i| \Le cN_r(i)^\b,
} 
and in particular, from Lemma~\ref{lem:nhoodmombd},
for 
\be{
    A(x,\ell) \Def \pi e^{e-2}\times
\begin{cases}
  \ell/\log\bclr{(e-1)}, & \ell>x, \\
     x, &\ell\leq x,
    \end{cases}
 } 
 we have
\ben{\label{eq:mnbd}
   \norm{X_i}_{\ell}\Le  c \norm{N_r(i)^\b}_{\ell} 
       \Eq  c\norm{N_r(i)}_{\b\ell}^{\b} \Le  2^\b c A(\lambda, \ell\b)^{r\b}.
}
Using~\eq{eq:mnbd} and Minkowski's inequality, this gives
\be{
   \IE\babs{X_i-\IE X_i}^{q} \Eq \norm{X_i-\IE X_i}_{\rrr}^{\rrr}
 		\Le \bclr{\norm{X_i}_{\rrr}+\norm{X_i}_1}^{\rrr}
		\Le (2^{1+\b} c)^{\rrr} \, A(\lambda, \rrr\b)^{r\b\rrr}.
}
Thus, from~\eq{eq:intt1a}, we can bound
\ben{ \label{G-norm-bnd}
    \norm{G}_{\rrr}\Le  2^{1+\b} n c (C_A \L_{\rrr}\sp{1})^{r\b},
}
where $\L_{s}\sp{1} := \max\{\l,s\b\}$ and $C_A=\pi e^{e-2}/\log(e-1)$.

Moving to~\eq{eq:intt1b}, we note that
$z_j := \IE\babs{\sum_{i=1}^n (X_i\sp{j}-X_i)}^{\rrr}$ is the same for all~$j$, 
so that $\norm{D}_q^q$ in~\eq{eq:intt1b} can be bounded by~$z_1$.
Define the indicators
\bes{
%           I_i &\Def I[E_{il} \Eq  1 \mbox{ for some } l \in \cN_r(1,\cG_n)];\\
           I_i &\Def I[E'_{il} \Eq  1 \mbox{ for some } l \in V(\cN_r(1,\cG_n))],
}
and then the subsets of vertices
\be{
  \II\ui_- \Def V\bclr{\cN_{r+1}(1, \cG_n)}; \qquad
  \II\ui_+ \Def \{i \notin V(\cN_r(1,\cG_n))\colon\, I_i=1\}.
}
The difference $X_j\ui - X_j$ can be written as $(X_j\ui - \hX_j\ui) + (\hX_j\ui - X_j)$,
where $\hX_j\ui := U\bclr{\cN_r(j,\cG\sp1_{n,r})}$, and $\cG\sp1_{n,r}$ is defined to be 
%the $s$-neighbourhood
%of~$j$ in $\cG\ui_r$, 
the subgraph of~$\cG_n$ consisting of the edges between the vertices 
$\{i \notin N_r(1,\cG_n)\}$; in particular, vertices in $N_r(1,\cG_n)$ are isolated in $\cG\sp1_{n,r}$.  
Then $\hX_j\ui - X_j$ can only be non-zero  if $j \in \cN_{r-1}(i, \cG_n)$ 
for some $i \in \II\ui_-$, and $X_j\ui - \hX_j\ui$ can only be non-zero if $j \in \cN_{r-1}(i, \cG_n)$
for some $i \in V\bclr{\cN_r(1,\cG_n)}\cup\II\ui_+$. In the former case, 
\[
    |\hX_j\ui - X_j| \Le 2cN_r(j,\cG_n)^\b \Le 2cN_{3r}(1,\cG_n)^\b,
\]
and there are at most $N_{2r}(1,\cG_n)$ such indices;  hence
\ben{\label{ADB-lower-bnd}
    \sum_{j=1}^n |\hX_j\ui - X_j| \Le 2c N_{2r}(1,\cG_n)\{N_{3r}(1,\cG_n)\}^{\b}. 
}

To bound $|X_j\ui - \hX_j\ui|$, define
\bes{ 
   \cN^*_{lr}(1) &\Def \cN_r(1,\cG_n) \cup \hNN_{lr}(1),
}
where
\bes{
     \hNN_{lr}(1) \Def  \bigcup_{i \in \II\ui_+} \cN_{(l-1)r-1}(i,\cG_{n,r}\sp1) ,\qquad l=2,3;
}
set $\hN_{lr}(1) := |V(\hNN_{lr}(1))|$, and note that
$N_{lr}^*(1) := |V(\cN^*_{lr}(1))| \le N_r(1) + \hN_{lr}(1)$.  Then there are at most $N^*_{2r}(1)$ indices~$j$
at which $|X_j\ui - \hX_j\ui|$ can be non-zero, and, for such~$j$, the difference cannot exceed
\[
   c\babs{V\bclr{\cN_r(j,G_{n}\sp{1})}}^\b + c\{N_r(j)\}^\b   \Le 2c\{N^*_{3r}(1)\}^\b.
\] 
Hence
\ben{ \label{ADB-upper-bnd}
    \sum_{j=1}^n |X_j\ui - \hX\ui_j| \Le 2c N^*_{2r}(1)\{N^*_{3r}(1)\}^{\b}.
}

Now, by H\"older's inequality,
\[
   \|N_{2r}(1)\{N_{3r}(1)\}^{\b}\|_{\rrr} \Le \|N_{2r}(1)\|_{(1+\b)\rrr}\,\|\{N_{3r}(1)\}^{\b}\|_{(1+\b)\rrr/\b}
       \Eq \|N_{2r}(1)\|_{(1+\b)\rrr}\,\|N_{3r}(1)\|_{(1+\b)\rrr}^\b,
\]
which can be bounded using Lemma~\ref{lem:nhoodmombd}; this gives
\ben{ \label{ADB-1st-bnd}
     \|N_{2r}(1)\{N_{3r}(1)\}^{\b}\|_{\rrr} 
      \Le 2^{1+\b} A(\lambda,(1+\b)\rrr)^{2r+3r\b} \Le 2^{1+\b} \{C_A \L_{\rrr}\sp{2}\}^{2r+3r\b},
}
where $\L_{s}\sp{2} := \max\{\l,s(1+\b)\}$.  Similarly, we have
\ben{\label{ADB-hat-norms}
    \|N^*_{ {2}r}(1)\{N^*_{ {3}r}(1)\}^{\b}\|_{\rrr} \Le \|N^*_{ {2}r}(1)\|_{(1+\b)\rrr}\,\|N^*_{ {3}r}(1)\|_{(1+\b)\rrr}^\b,
}
where $N^*_s(1) \le N_r(1) + \hN_s(1)$.  The norms of~$N_r(1)$ are bounded once more by Lemma~\ref{lem:nhoodmombd}. 
For $\hN_{lr}(1)$, $l= {2,3}$, 
we set $N_{(l-1)r-1}\ui(i):=\abs{V(\cN_{(l-1)r-1}(i,\cG_{n,r}\ui))}$ and observe that, conditional on~$\cN_r(1,\cG_n)$, 
the indicators $\{I_i,\,i\notin \cN_r(1,\cG_n)\}$ are independent of the quantities
$\{N_{(l-1)r-1}\ui(i),\,i \notin \cN_r(1,\cG_n)\}$;  the $I_i$ all have expectation less
than $pN_r(1)$, and, for $i \notin \cN_r(1,\cG_n)$, each $N_{(l-1)r-1}\sp{1}(i)$
 has distribution dominated by the
{\em unconditional\/} distribution of $N_{(l-1)r-1}(i)$, because~$\cN_{(l-1)r-1}(i,\cG_{n,r}\sp{1})$ is constructed
just as~$\cN_{(l-1)r-1}(i,\cG_n)$, but from a subset of the vertices~$[n]$.  Applying Lemma~\ref{lem:condmink},
with $\cE := \II\ui_+$ and $Y_i := N\ui_{(l-1)r-1}(i)$, it follows that
\[
    \IE\bbclr{\hN_{lr}(1)^{s} \giv \hN_r(1)}  
       \Le  \IE\bclc{\hB^{s} \giv N_r(1)}\,\IE\bbclc{\bclr{N_{(l-1)r-1}(i)}^{s}},
\]
where $\hB \sim \Bi(n,pN_r(1))$.  Bounding the expectations by Lemmas \ref{lem:binmombd} and~\ref{lem:nhoodmombd},
this in turn gives
\bes{
   \IE\bbclr{\hN_{lr}(1)^{s} \giv N_r(1)} 
       &\Le  A(\lambda N_r(1), s)^{s}\{2 A(\lambda,s)^{(l-1)r-1}\}^{s} \\
       &\Le  \{C_A (\lambda N_r(1) + s)\}^{s}\{2 A(\lambda,s)^{(l-1)r-1}\}^{s}.
}
Invoking Lemma~\ref{lem:nhoodmombd} once again, it thus follows that
\bes{
   \|\hN_{lr}(1)\|_{s} &\Le  C_A
         \bclc{\l\|N_r(1)\|_{s} + s} \,\{2 A(\lambda,s)^{(l-1)r-1}\}   \\
        &\Le   C_A \bclc{2\l A(\l,s)^{r} + s} \,\{2 A(\lambda,s)^{(l-1)r-1}\} \\
        &\Le   8\{C_A(\l\vee s)\}^{lr}.
}
Altogether we find that, for $s\geq1$ and $l=2,3$, 
\be{
     \|N^*_{lr}(1)\|_{s}\leq \norm{N_r(1)}_s+\norm{\hN_{lr}(1)}_s 
          \leq 2\bclc{C_A (\lambda \vee s)}^r+8\{C_A(\l\vee s)\}^{lr}\leq 10\{C_A(\l\vee s)\}^{lr}.
}
Hence, from~\eq{ADB-hat-norms}, 
\ben{\label{ADB-2nd-bnd}
    \|N^*_{2r}(1)\{N^*_{3r}(1)\}^{\b}\|_{\rrr} 
%    \Le 10\{ C_A(\l\vee (1+\b)\rrr)\}^{2r}\,
%              10^\b \{C_A(\l\vee (1+\b)\rrr)\}^{3r\b}
               \Le 10^{1+\b}\{C_A \L_{\rrr}\sp{2}\}^{2r+3r\b}.
}
Combining \eq{ADB-1st-bnd} and~\eq{ADB-2nd-bnd} with \eq{eq:intt1b}, \eq{ADB-lower-bnd} and~\eq{ADB-upper-bnd},
we see that
\ben{ \label{D-norm-bnd}
     \|D\|_{\rrr} \Le (10^{1+\b} + 2^{1+\b})c (C_A \L_{\rrr}\sp{2})^{2r+3r\b}.
}

Now, applying Theorem~\ref{thm:stnmomineq0} with the bounds \eq{G-norm-bnd} and~\eq{D-norm-bnd}, the theorem
follows.
\end{proof}

\begin{lemma}\label{lem:condmink}
Let $\cI$ be a finite index set, $\cE$ be a random (possibly empty) subset of $\cI$,
and define $E:=\abs{\cE}$. Let $\{Y_i\}_{i\in\cI}$ be a collection of random variables
independent of $\cE$ and, for $\ell\in \IN$,  
let  $\max_{i\in\cI} \norm{Y_i}_\ell \le y$.
Then
\be{
   \bbnorm{\sum_{i\in\cE} Y_j}_{\ell}\Le  y\norm{E}_{\ell} .
}
\end{lemma}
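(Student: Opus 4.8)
The plan is to condition on the random set $\cE$ and exploit the assumed independence of $(Y_i)_{i\in\cI}$ from $\cE$, which reduces the whole estimate to an application of Minkowski's inequality over a \emph{fixed} index set. Since $\cI$ is finite, $\cE$ takes only finitely many values, so it suffices to argue separately on each of the finitely many atoms $\{\cE=A\}$, $A\subseteq\cI$.

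First I would fix an arbitrary deterministic subset $A\subseteq\cI$ and apply Minkowski's inequality to the finite family $(Y_i)_{i\in A}$, obtaining $\bbnorm{\sum_{i\in A}Y_i}_\ell\Le\sum_{i\in A}\norm{Y_i}_\ell\Le\abs{A}y$. Next, because $(Y_i)_{i\in\cI}$ is independent of $\cE$, the conditional law of $\sum_{i\in\cE}Y_i$ given $\{\cE=A\}$ coincides with the unconditional law of $\sum_{i\in A}Y_i$; hence, on the event $\{\cE=A\}$,
\be{
   \IE\bbclr{\bbabs{\ts\sum_{i\in\cE}Y_i}^\ell \giv \cE} \Eq \IE\bbabs{\ts\sum_{i\in A}Y_i}^\ell
        \Le \bclr{\abs{A}y}^\ell \Eq (Ey)^\ell ,
}
the last equality holding because $E=\abs{A}$ there. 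As $A$ was arbitrary, this shows that $\IE\bbclr{\bbabs{\sum_{i\in\cE}Y_i}^\ell \giv \cE}\Le y^\ell E^\ell$ almost surely. Taking expectations and then $\ell$-th roots yields $\bbnorm{\sum_{i\in\cE}Y_i}_\ell\Le y\norm{E}_\ell$, which is the assertion of the lemma.

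There is essentially no obstacle to overcome here. The only step that deserves a moment's care is the transfer of the deterministic Minkowski bound to the conditional expectation in the display above: this is exactly where the hypothesis that $(Y_i)_{i\in\cI}$ is independent of $\cE$ is used, guaranteeing that conditioning on $\{\cE=A\}$ does not distort the joint distribution of the summands, so that the fixed-set bound applies verbatim.
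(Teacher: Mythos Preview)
Your proof is correct and follows essentially the same route as the paper's: condition on the value $\cE=\cS$, use independence to reduce to the unconditional law of $\sum_{i\in\cS}Y_i$, apply Minkowski over this fixed index set, and then take expectations. If anything, your version is slightly more careful in retaining absolute values inside the $\ell$-th moment and in making explicit where the independence hypothesis enters.
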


\begin{proof}
Using Minkowski's inequality in the third line, we have
\ba{
\bbnorm{\sum_{i\in\cE} Y_i}_{\ell}^\ell&\Eq \IE\bbbcls{\bbclr{\sum_{i\in\cE} Y_i}^\ell} \\
		&\Eq \sum_{\cS\subseteq\cI} \IE\bbbcls{\bbclr{\sum_{i\in\cS} Y_i}^\ell \big| \cE=\cS} \IP(\cE=\cS)\\
		&\Le  \sum_{\cS\subseteq\cI} \bbclr{\sum_{i\in\cS} \norm{Y_i}_{\ell}}^\ell \IP(\cE=\cS)\\
		&\Le  y^\ell\sum_{\cS\subseteq\cI}^\infty \abs{\cS}^\ell   \IP(\cE=\cS)
                          \Eq  y^\ell\norm{E}_{\ell}^\ell. \qedhere
}
\end{proof}

\medskip
\begin{lemma}\label{lem:binmombd}
Let $n\in\IN$, $0\leq p\leq 1$, $Y\sim\Bi(n,p)$, and $\ell\in\IN$.
Then
$\norm{Y}_{\ell}\leq A(np,\ell)$,
where 
\be{
    A(x,\ell) \Def \pi e^{e-2}\times
\begin{cases}
  \ell/\log\bclr{(e-1)}, & \ell>x, \\
     x, &\ell\leq x.
    \end{cases}
 }   
In particular,  $A(x,l) \le C_A(x\vee l) \le C_A(x+l)$, where $C_A := \pi e^{e-2}/\log\clr{e-1}$.
\end{lemma}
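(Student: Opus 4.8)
The plan is to run a Bernstein‑type argument based on the exponential moment of $Y\sim\Bi(n,p)$. For any $t>0$, since $Y\ge0$ we have $e^{tY}\ge (tY)^\ell/\ell!$, hence the pointwise bound $Y^\ell\le \ell!\,t^{-\ell}e^{tY}$; taking expectations and using $\IE e^{tY}=(1-p+pe^t)^n=(1+p(e^t-1))^n\le e^{np(e^t-1)}$, we obtain, for every $t>0$,
\[
   \norm{Y}_\ell\ \le\ (\ell!)^{1/\ell}\,t^{-1}\,e^{np(e^t-1)/\ell}.
\]
It then remains to bound $(\ell!)^{1/\ell}$ once and for all, and to choose $t$ suitably in each of the two regimes $\ell\le np$ and $\ell>np$ appearing in the definition of $A$.

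For the factorial, Stirling's inequality $\ell!\le\sqrt{2\pi\ell}\,(\ell/e)^\ell e^{1/(12\ell)}$ gives $(\ell!)^{1/\ell}\le (\ell/e)(2\pi\ell)^{1/(2\ell)}e^{1/(12\ell^2)}$, and since the product of the last two factors is decreasing in $\ell\ge1$ and equals $\sqrt{2\pi}\,e^{1/12}<\pi$ at $\ell=1$, we get the clean estimate $(\ell!)^{1/\ell}\le \pi\ell/e$. Now, if $\ell\le np$ I would take $t=\ell/(np)\in(0,1]$: since $s\mapsto(e^s-1)/s$ is increasing on $(0,\infty)$ with value $e-1$ at $s=1$, we have $np(e^t-1)/\ell=(e^t-1)/t\le e-1$, while $t^{-1}=np/\ell$, so the displayed bound yields $\norm{Y}_\ell\le (\pi\ell/e)(np/\ell)e^{e-1}=\pi e^{e-2}\,np$, which is exactly $A(np,\ell)$ in this regime. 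If instead $\ell>np$, take $t=1$: then $np(e-1)/\ell<e-1$, so $\norm{Y}_\ell\le (\pi\ell/e)e^{e-1}=\pi e^{e-2}\ell$, and since $e-1<e$ gives $\log(e-1)<1$, this is at most $\pi e^{e-2}\ell/\log(e-1)=A(np,\ell)$. Combining the two cases proves $\norm{Y}_\ell\le A(np,\ell)$.

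The concluding inequality $A(x,\ell)\le C_A(x\vee\ell)\le C_A(x+\ell)$ is then immediate from the definition of $A$: when $\ell>x$ one has $A(x,\ell)=C_A\ell=C_A(x\vee\ell)$, and when $\ell\le x$ one has $A(x,\ell)=\pi e^{e-2}x\le C_A x=C_A(x\vee\ell)$ because $\log(e-1)<1$; and trivially $x\vee\ell\le x+\ell$. I do not expect any genuine obstacle: the computation is routine once the two choices of $t$ are fixed, the only points needing a line of justification being the monotonicity fact $(e^s-1)/s\le e-1$ on $(0,1]$ and the Stirling estimate $(\ell!)^{1/\ell}\le\pi\ell/e$, both elementary. (An alternative would be to first dominate $\IE Y^\ell$ by the corresponding moment of $\Po(np)$ via factorial moments and Stirling numbers of the second kind, then argue with the Poisson moment generating function; but this detour is unnecessary, since the Binomial mgf already obeys the Poisson‑type bound used above.)
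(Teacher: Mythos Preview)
Your proof is correct and follows essentially the same route as the paper: both derive the inequality $\IE Y^\ell \le \ell!\,t^{-\ell}(1-p+pe^t)^n$ and then optimize over~$t$ in the two regimes, using a Stirling-type bound on $\ell!$. Your derivation of the starting inequality via the pointwise bound $Y^\ell\le \ell!\,t^{-\ell}e^{tY}$ is a shade more direct than the paper's integration-and-Markov argument, and your choice $t=1$ in the case $\ell>np$ is simpler than the paper's $\theta=\log((e-1)\ell/(np))$ while still yielding the stated $A(np,\ell)$.
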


\begin{proof}
For any $\theta>0$, we have, using Markov's inequality,
\ban{
\IE[Y^\ell]&\Eq  \ell \int_0^\infty t^{\ell-1} \IP(Y\geq t) dt\notag \\
	&\Le  \ell \int_0^\infty t^{\ell-1} \IE[e^{\theta Y}] e^{-\theta t} dt \notag \\
	&\Eq  \Gamma(\ell+1) \theta^{-\ell} \bclr{1-p+pe^{\theta}}^n. \label{eq:gammgfbd}
}
If $\ell>np$, we choose $\theta=\log\bclr{(e-1) \ell/(np)}$, which yields
\be{
\norm{Y}_{\ell}\Le  \Gamma(\ell+1)^{1/\ell} \frac{\bclr{1-p+\frac{(e-1) \ell}{n}}^{n/\ell}}{\log\bclr{(e-1)\ell/(np)}}\Le  \Gamma(\ell+1)^{1/\ell} \frac{e^{e-1}}{\log\bclr{(e-1) \ell/(np)}}.
}
Now using the explicit bound on the Gamma function 
given in \cite[Corollary~1.2]{Batir2008}:
\ben{\label{eq:Batirgambd}
\Gamma(\ell+1)\Le  \sqrt{\pi(2\ell+1)} \ell^\ell e^{-\ell},
}
we find 
\be{
\norm{Y}_{\ell}\Le  \frac{e^{e-2}\ell}{\log\bclr{\frac{(e-1)\ell}{np}}} \bclr{\pi(2\ell+1)}^{1/(2\ell)}\Le  \frac{e^{e-2} \pi \ell}{\log(e-1)}, %\Le   \frac{e^{e-2} \pi \ell}{\log\bclr{\frac{(e-1)\ell}{np}}} ,
}
where the second inequality follows from elementary considerations, noting $\ell>np$.

If $\ell\le np$, set $\theta=\ell/(np)$ in~\eq{eq:gammgfbd} and use~\eq{eq:Batirgambd} to find
\be{
\norm{Y}_{\ell}\Le  np  \frac{ \Gamma(\ell+1)^{1/\ell}}{\ell} \bclr{1-p+pe^{\ell/(np)}}^{n/\ell}
	\Le  np  \bclr{\pi(2\ell+1)}^{1/(2\ell)} e^{-1} e^{\frac{np}{\ell}\bclr{e^{\ell/(np)}-1}}.
}
Recalling that $\ell\leq np$, the final exponential is maximized at $\ell=np$, which 
leads to 
\be{
\norm{Y}_{\ell}\Le  np  \bclr{\pi(2\ell+1)}^{1/(2\ell)} e^{e-2}\Le   e^{e-2} \pi (np).
\qedhere
}
\end{proof}

\begin{lemma}\label{lem:nhoodmombd}
Fix $n,r\in\IN$ and $p\in[0,1]$, and let $N_r$ be the number of vertices in the $r$-neighbourhood
of a given vertex in an \ER\ random graph on $n$ vertices with edge probability parameter $p$. Then
\be{
     \norm{N_r}_{\ell}\Le  \frac{A(np,\ell)^{r+1}-1}{A(np,\ell)-1} \Le  2A(np,\ell)^{r},
}
where $A(x,\ell)$ is defined in Lemma~\ref{lem:binmombd}.
\end{lemma}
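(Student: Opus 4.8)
The plan is to induct on~$r$, combining a one-step neighbourhood exploration with the conditional Minkowski inequality of Lemma~\ref{lem:condmink}. The bound is trivial for $r=0$, since then $N_0\equiv1$ and $(A(np,\ell)^{1}-1)/(A(np,\ell)-1)=1$; here one uses that $A(np,\ell)\ge\pi e^{e-2}>2$, because the bracketed quantity in the definition of $A(x,\ell)$ is at least $\ell\ge1$ in either case. So suppose $r\ge1$ and that the bound holds for $r-1$. Fix the distinguished vertex~$v$, write $\cE:=\{i\ne v\colon\{v,i\}\text{ is an edge of }\cG_n\}$ for its set of neighbours, let $\cG_n^{-}$ be the graph obtained from~$\cG_n$ by deleting every edge incident to~$v$, and for each $i\ne v$ let $Y_i$ be the number of vertices within graph distance~$r-1$ of~$i$ in~$\cG_n^{-}$. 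The key deterministic fact is that any $u\ne v$ with $d_{\cG_n}(v,u)\le r$ lies within distance~$r-1$ of some neighbour of~$v$ along a path avoiding~$v$: a geodesic from~$v$ to~$u$ is simple, so its first vertex~$i$ lies in~$\cE$ and its remaining vertices and edges avoid~$v$, witnessing $d_{\cG_n^{-}}(i,u)\le r-1$. Hence $u$ is counted by~$Y_i$, and so
\be{
   N_r \Le 1+\sum_{i\in\cE}Y_i\qquad\text{(pointwise).}
}

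To apply Lemma~\ref{lem:condmink} to $\sum_{i\in\cE}Y_i$, observe that~$\cE$ is a function of the edge indicators incident to~$v$, whereas $(Y_i)_{i\ne v}$ is a function of the edge indicators of~$\cG_n^{-}$, i.e.\ of the edge indicators \emph{not} incident to~$v$; these two blocks are disjoint, so~$\cE$ is independent of $(Y_i)_{i\ne v}$. For the two ingredients that the lemma needs: first, since $\cG_n^{-}\subseteq\cG_n$, adding edges only shrinks distances, so the distance-$(r-1)$ ball of~$i$ in~$\cG_n^{-}$ is contained in that in~$\cG_n$, whence $Y_i\le N_{r-1}(i)$ pointwise; the latter has the same law as~$N_{r-1}$, so by the inductive hypothesis $\norm{Y_i}_\ell\le\norm{N_{r-1}}_\ell\le(A(np,\ell)^{r}-1)/(A(np,\ell)-1)$, uniformly in~$i$. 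Second, $\abs{\cE}$ is the degree of~$v$, which is $\Bi(n-1,p)$-distributed and hence stochastically dominated by $\Bi(n,p)$, so its $\ell$-norm is at most the $\ell$-norm of a $\Bi(n,p)$ variable, which by Lemma~\ref{lem:binmombd} is at most $A(np,\ell)$. Writing $A:=A(np,\ell)$ and combining the displayed inequality with the triangle inequality and Lemma~\ref{lem:condmink} gives
\be{
   \norm{N_r}_\ell\Le 1+A\cdot\frac{A^{r}-1}{A-1}\Eq\frac{A^{r+1}-1}{A-1},
}
which is the first bound; the second follows because $A\ge2$, so $(A^{r+1}-1)/(A-1)\le A^{r+1}/(A-1)=A^{r}\cdot A/(A-1)\le 2A^{r}$.

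The only genuinely delicate point — the ``main obstacle'' — is choosing the decomposition so that Lemma~\ref{lem:condmink} honestly applies: one must delete the \emph{edges} at~$v$ (not the vertex, and not work in the full graph), so that~$\cE$ and the~$Y_i$ depend on disjoint, hence independent, sets of edge indicators, while still keeping the pointwise bound $N_r\le1+\sum_{i\in\cE}Y_i$ via the simple-geodesic argument. The remaining comparisons should then be routed through stochastic domination onto the $n$-vertex objects ($\cG_n^{-}\subseteq\cG_n$ and $\Bi(n-1,p)\preceq\Bi(n,p)$), rather than by manipulating the~$A$-values directly, since $A(\cdot,\ell)$ is not monotone in its first argument and so $A((n-1)p,\ell)$ cannot simply be replaced by $A(np,\ell)$.
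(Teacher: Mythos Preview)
Your proof is correct. The paper takes a closely related but differently packaged route: it observes that $N_r$ is stochastically dominated by $\sum_{s=0}^r Z_s$, where $(Z_s)_{s\ge0}$ are the generation sizes of a Galton--Watson process with $\Bi(n,p)$ offspring started from one individual, and then bounds $\|Z_s\|_\ell \le A(np,\ell)^s$ by iterating Lemma~\ref{lem:condmink} on the branching recursion $Z_{s+1}=\sum_{j=1}^{Z_s}B_j$; summing the resulting geometric series gives the bound. Your direct induction on~$r$ inside the graph is essentially the same recursion unfolded: the one-step decomposition $N_r\le 1+\sum_{i\in\cE}Y_i$ together with $\|Y_i\|_\ell\le\|N_{r-1}\|_\ell$ plays exactly the role of the branching step. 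What your approach buys is self-containment --- no need to invoke the exploration/coupling to the external branching process --- at the price of spelling out the independence and monotonicity arguments (disjoint edge sets for $\cE$ versus the $Y_i$; $\cG_n^{-}\subseteq\cG_n$; $\Bi(n-1,p)\preceq\Bi(n,p)$) that the coupling absorbs in one line. The paper's route is a touch shorter once one is willing to cite the standard domination; yours is arguably cleaner pedagogically and, as you note, sidesteps any temptation to compare $A((n-1)p,\ell)$ with $A(np,\ell)$ directly.
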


\begin{proof}
Let $(Z_0,Z_1,Z_2,\ldots)$ be the sequence of  generation sizes
for a Galton-Watson branching process  with offspring distribution $\Bi(n,p)$, started from a single individual; $Z_0=1$. 
Then, using the usual exploration process process coupling found, for example, in \cite[Chapter~4]{Hofstad2017}, $\sum_{s=0}^r Z_s$ stochastically dominates $N_r$, so that
 $\norm{N_r}_\ell \leq \sum_{s=0}^r \norm{Z_s}_\ell$, for any $r,\ell\geq0$. To bound $\norm{Z_s}_{\ell}$, we use the random 
sum representation of a branching process and repeatedly apply Lemma~\ref{lem:condmink} to find
\be{
   \norm{Z_s}_{\ell}\Le  \norm{B}_{\ell}^{s} ,
}
where $B\sim\Bi(n,p)$.
The result now easily follows from Lemma~\ref{lem:binmombd}, and because $A(x,l) \ge 2$ for all $x > 0$ and $l \in \IN$.
\end{proof}

\section*{Acknowledgments}
The authors were partially supported by the Australian Research Council Discovery Grant DP150101459 and 
the ARC Centre of Excellence for Mathematical and Statistical Frontiers, CE140100049. 
We thank the referee for their helpful comments, and Larry Goldstein and Peter Eichelsbacher for pointing out some errors and omissions in a previous draft.
%
%\bibliographystyle{mynatbib}
%\bibliography{/Users/rossn1/Desktop/Dropbox/Bib}

\end{document}